\numberwithin{equation}{section}
\newtheorem{theorem}{Theorem}[section]
\newtheorem{lemma}[theorem]{Lemma}
\theoremstyle{definition}
\newtheorem{remark}[theorem]{Remark}
\newtheorem{open}[theorem]{Open problem}
\newcommand\C{{\operatorname{Cov}}}
\newcommand\Cov{\operatorname{Cov}}
\newcommand\gnp{\ensuremath{G(n,p)}}
\newcommand\ggnp{{\ensuremath{\vec G(n,p)}}}
\tikzstyle{block}=[draw opacity=0.7,line width=1.4cm]
\newcommand\XA{X_A}
\newcommand\XB{X_B}
\newcommand\YA{Y_A}
\newcommand\YB{Y_B}
\newcommand\ZA{Z_A}
\newcommand\xa{\XA}
\newcommand\xb{\XB}
\newcommand\xaa{X_A'}
\newcommand\xbb{X_B'}
\newcommand\yaa{Y_A'}
\newcommand\ybb{Y_B'}
\newcommand\IA{I_A}
\newcommand\IB{I_B}
\newcommand\iaa{I_A'}
\newcommand\ibb{I_B'}
\newcommand\ja{J_A}
\newcommand\jb{J_B}
\newcommand\la{{\ell_\ga}}
\newcommand\lax{\ell_\ga}
\newcommand\lb{{\ell_\gb}}
\newcommand\lba{\gd}
\newcommand\uv{\mu}
\newcommand\lab{{\ell_{\ga\gb}}}
\newcommand\gab{\ensuremath{(\ga,\gb)}}
\newcommand\ia{I_\ga}
\newcommand\ib{I_\gb}
\newcommand\ig{I_\gam}
\newcommand\ggl[1]{\gG^L_{#1}}
\newcommand\nn[1]{n^{-#1}}
\newcommand\onn[1]{O\bigpar{n^{-#1}}}
\newcommand{\refT}[1]{Theorem~\ref{#1}}
\newcommand{\refL}[1]{Lemma~\ref{#1}}
\newcommand{\refS}[1]{Section~\ref{#1}}
\newcommand{\refF}[1]{Figure~\ref{#1}}
\newcommand{\refand}[2]{\ref{#1} and~\ref{#2}}
\newcommand\ga{\alpha}
\newcommand\gb{\beta}
\newcommand\gd{\delta}
\newcommand\gam{\gamma}
\newcommand\gG{\Gamma}
\newcommand\E{\operatorname{\mathbb E{}}}
\renewcommand\P{\operatorname{\mathbb P{}}}
\renewcommand{\=}{:=}
\newcommand\bigpar[1]{\bigl(#1\bigr)}
\newcommand\Bigpar[1]{\Bigl(#1\Bigr)}
\newcommand\lrpar[1]{\left(#1\right)}
\newcommand\parfrac[2]{\lrpar{\frac{#1}{#2}}}
\newcommand\Bigparfrac[2]{\Bigpar{\frac{#1}{#2}}}
\newcommand\punkt{\spacefactor=1000}  
\newcommand\ie{i.e\punkt}
\newcommand\set[1]{\ensuremath{\{#1\}}}
\newcommand\qw{^{-1}}
\newcommand\qww{^{-2}}
\newcommand\qwq{^{-99}}
\newenvironment{romenumerate}[1][0pt]{
\addtolength{\leftmargini}{#1}\begin{enumerate}
 \renewcommand{\theenumi}{\textup{(\roman{enumi})}}%
 }{\end{enumerate}}
\newenvironment{romx}{%
\setcounter{enumi}{0}%
\renewcommand\item[1]{
\smallskip\noindent\refstepcounter{enumi}\emph{\theenumi{} ##1}:}%
 \renewcommand{\theenumi}{\textup{(\roman{enumi})}}%
 }{}
\newcommand\REM[1]{{\raggedright\texttt{[#1]}\par\marginal{XXX}}}
\begin{document}

\title{First critical probability for a problem on random orientations in
  $G(n,p)$.}

\author{Sven Erick Alm} 
\address{Department of Mathematics, Uppsala University, 
P.O.\ Box 480,  SE-751 06, Uppsala, Sweden.}
\email{sea@math.uu.se}

\author{Svante Janson} 
\thanks{Svante Janson is supported by the Knut and Alice Wallenberg Foundation}
\address{Department of Mathematics, Uppsala University, 
P.O.\ Box 480,  SE-751 06, Uppsala, Sweden.}
\email{svante.janson@math.uu.se}
\author{Svante Linusson} \thanks{Svante Linusson is a Royal Swedish Academy of Sciences Research Fellow supported by a grant from 
the Knut and Alice Wallenberg Foundation.}
\address{Department of Mathematics, KTH-Royal Institute of Technology, 
  SE-100 44, Stockholm, Sweden.}
\email{linusson@math.kth.se}
\thanks{This research was initiated when all three authors visited the
  Institut Mittag-Leffler (Djursholm, Sweden).}

\date{25 March, 2013} 

\begin{abstract}
We study the random graph $\gnp$ with a random orientation. For three fixed
vertices $s,a,b$ in 
$G(n,p)$ we study the correlation of the events $\{a\to s\}$ and $\{s\to b\}$. 
We prove that asymptotically the correlation is negative for small $p$,
$p<\frac{C_1}n$, where $C_1\approx0.3617$, positive for 
$\frac{C_1}n<p<\frac2n$ and up to $p=p_2(n)$. Computer aided computations
suggest that $p_2(n)=\frac{C_2}n$, with $C_2\approx7.5$.  
We conjecture that the correlation  then stays negative for $p$ up to the
previously known zero at $\frac12$; for larger $p$ it is positive.
\end{abstract}

\maketitle

\section{Introduction} \label{S:Intro}
Let $G(n,p)$ be the random graph with $n$ vertices where each edge has
probability $p$ of being present independent of the other edges.  
We further orient each present edge either way independently with
probability $\frac{1}{2}$, and denote the resulting random directed graph by
\ggnp.  
This version of orienting edges in a graph, random or not, is natural and
has been considered previously in e.g. \cite{AL,AJL,G00,M81}. 

Let $a,b,s$ be three distinct vertices and define the events $A:=\{a\to
s\}$, that there exists a directed path in $\ggnp$ from $a$ to $s$, and 
$B:=\{s\to b\}$.
In a previous paper, \cite{AJL}, we showed that, for fixed $p$, the
correlation between $A$ and $B$ asymptotically is negative for  
$p<\frac12$ and positive for $p>\frac12$. 
Note that we take the covariance
in the combined  probability space of 
$\gnp$ and the orientation of edges, which is often referred to as the
annealed case, see \cite{AJL} for details.  
We say that a probability $p\in(0,1)$ is \emph{critical} (for a given $n$)
if the covariance $\C(A,B)=0$. 
We have thus shown in \cite{AJL} that there is a critical
probability $\frac12+o(1)$ for large $n$.
(Moreover, this is the largest critical probability, since the covariance
stays positive for all larger $p<1$.)
We also conjectured 
that for large $n$,
there are in fact (at least) three critical probabilities when the
covariance changed sign. Based on computer aided computations we guessed
that the first two critical probabilities would be approximately
$\frac{0.36}{n}$ 
and $\frac{7.5}{n}$. In this note we prove that there is a first critical
probability of the conjectured order, where the covariance changes from negative to positive, and thus there must be at least three
critical probabilities.
Our theorem is as follows.

\begin{theorem}\label{T:firstzero}
With $p=\frac{2c}n$ and sufficiently large $n$, 
the covariance $\C(A,B)$ is negative for $0<c<c_1$ and positive for $c_1<c<1$, 
where $c_1\approx0.180827$ is a solution to
$(2-c)(1-c)^3=1$.
Furthermore, for fixed $c$ with $0\le c<1$,
\begin{equation}
  \label{main}
\C(A,B)
=\left(1-(2-c)(1-c)^3\right)\cdot \frac{c^3}{(1-c)^5}\cdot \frac1{n^3}
+O\Big(\frac1{n^4}\Big).
\end{equation}
\end{theorem}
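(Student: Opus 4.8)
The plan is to compute the three probabilities $\P(A)$, $\P(B)=\P(A)$ and $\P(A\cap B)$ by expanding each as a sum over the small subgraphs that witness the relevant directed connections, and to read the covariance off the surviving terms. The structural fact that makes this work for $p=\frac{2c}n$ with $c<1$ is that directed reachability is subcritical: the set of vertices that can reach (or be reached from) a fixed vertex behaves like a branching process with mean offspring $c<1$, so the relevant witnesses have $O(1)$ vertices and all the series below converge. Let $X_P$ be the indicator that a given directed $a\to s$ path $P$ is present and correctly oriented, so $\E X_P=(c/n)^{|P|}$, and similarly $Y_Q$ for $s\to b$ paths $Q$. Expanding $\mathbb 1_A=1-\prod_P(1-X_P)$ and $\mathbb 1_B=1-\prod_Q(1-Y_Q)$, I would use the inclusion--exclusion identity
\begin{equation*}
\C(A,B)=\sum_{\emptyset\ne S}\sum_{\emptyset\ne T}(-1)^{|S|+|T|}\C(X_S,Y_T),\qquad X_S=\prod_{P\in S}X_P,
\end{equation*}
where $S,T$ range over finite sets of paths. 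Whenever the families $S$ and $T$ use disjoint sets of (undirected) edges, $X_S$ and $Y_T$ are functions of independent variables and the term vanishes; this is exactly the automatic cancellation of the ``independent'' $O(n^{-2})$ contribution, and it forces every surviving term to involve an edge shared between an $a\to s$ witness and an $s\to b$ witness.

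The heart of the argument is an order count. For a pair $(S,T)$ whose union has $V^*$ vertices the number of labellings is $(n-3)_{V^*-3}\sim n^{V^*-3}$, so the term is of order $n^{V^*-3-E}$ for the appropriate edge exponent $E$. I would show that the only configurations reaching order $n^{-3}$ are single pairs $|S|=|T|=1$ sharing an edge, and that these split into exactly two families. \emph{(i) Consistent overlap:} $P$ and $Q$ share a directed segment in the same orientation; this forces a directed cycle through $s$ (the shared segment, together with $P$'s tail into $s$ and $Q$'s head out of $s$), carrying a pendant in-path from $a$ and a pendant out-path to $b$, so the union is unicyclic with $E=V^*$ and $\C(X_P,Y_Q)\sim(c/n)^{E}$. \emph{(ii) Conflicting overlap:} $P$ and $Q$ share the single edge at $s$ with opposite orientations (a common neighbour $w$ of $s$, with $P$ using $w\to s$ and $Q$ using $s\to w$); then $X_PY_Q\equiv0$, so $\C(X_P,Y_Q)=-(c/n)^{|P|+|Q|}$, and the union is a tree on $V^*=|P|+|Q|$ vertices. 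Every remaining configuration --- two or more paths on either side, a second shared segment, or a conflicting segment of length $\ge2$ --- forces either a second independent cycle or a strict drop of $V^*$ below the edge count, hence is $O(n^{-4})$.

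It then remains to sum the two families as geometric series. Parametrising family (i) by the lengths $(p_1,p_2,q_1,q_2,m)$ of the in-pendant, $P$'s tail, $Q$'s head, the out-pendant and the shared segment (with $p_1,q_2\ge0$ and $p_2,q_1,m\ge1$ to keep $P,Q$ simple) gives
\begin{equation*}
\frac1{n^3}\sum c^{\,p_1+p_2+q_1+q_2+m}=\frac1{n^3}\cdot\frac1{(1-c)^2}\cdot\Bigpar{\frac{c}{1-c}}^3=\frac{c^3}{(1-c)^5}\cdot\frac1{n^3},
\end{equation*}
while family (ii), parametrised by $|P|=j\ge1$, $|Q|=k\ge1$ with the degenerate case $j=k=1$ (which would force $a=b$) removed, gives
\begin{equation*}
-\frac1{n^3}\Bigpar{\frac{c^2}{(1-c)^2}-c^2}=-\frac{c^3(2-c)}{(1-c)^2}\cdot\frac1{n^3}.
\end{equation*}
Adding the two produces precisely the coefficient $\frac{c^3}{(1-c)^5}\bigl(1-(2-c)(1-c)^3\bigr)$ of \eqref{main}. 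For the sign statement, note that $\frac{c^3}{(1-c)^5}>0$ on $(0,1)$ while $g(c):=(2-c)(1-c)^3$ satisfies $g'(c)=-(1-c)^2(7-4c)<0$ with $g(0)=2>1>0=g(1)$; hence $1-g(c)$ changes sign exactly once, at the root $c_1$ of $g(c)=1$, negative for $c<c_1$ and positive for $c_1<c<1$.

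The main obstacle is the rigorous bookkeeping of the order count together with uniform control of the remainder. One must prove that no witness family other than (i) and (ii) reaches order $n^{-3}$ --- in particular that all multi-path witnesses are genuinely of lower order rather than merely conjecturally so --- and that the $O(n^{-4})$ error, which is a sum over unboundedly many witness shapes and lengths, is summable; this is where subcriticality $c<1$ is indispensable and where the bounds deteriorate as $c\uparrow1$. Finally, to upgrade the leading-order sign to a statement about the sign of $\C(A,B)$ itself for whole ranges of $c$ and all large $n$, the error estimate must be made uniform on compact subsets of $(0,1)$, with additional care near the endpoints $c\to0$, $c\to1$ and near the critical value $c_1$.
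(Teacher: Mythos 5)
Your identification of the two dominant families and the resulting computation agree exactly with the paper: your family (ii) is the paper's Type 1 (the shared, oppositely oriented edge at $s$), your family (i) is its Type 2 (a directed cycle through $s$ with a pendant path from $a$ and a pendant path to $b$), and your two geometric series reproduce the paper's contributions $-\frac{(2-c)c^3}{(1-c)^2}n^{-3}$ and $\frac{c^3}{(1-c)^5}n^{-3}$ from Lemmas \refand{L:Type1}{L:Type2}, hence the coefficient in \eqref{main}; your monotonicity argument for $g(c)=(2-c)(1-c)^3$ is, if anything, cleaner than the paper's appeal to the discriminant of the quartic. Where you genuinely diverge is in how $\C(A,B)$ is reduced to a sum over \emph{pairs of single paths}. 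The paper writes $I_A=\min(X_A,1)$ for the path count $X_A=\sum_\ga\ia$ (truncated at length $L=\log^2n$), bounds the correction by $0\le X_A-I_A\le\binom{X_A}{2}$, and kills it with second- and third-moment estimates (\refL{L:rest}), after which $\C(X_A,X_B)=\sum_{\ga,\gb}\Cov(\ia,\ib)$ is analysed pair by pair (\refL{L:3} for the non-dominant pairs). You instead propose the full inclusion--exclusion over \emph{sets} of paths.

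That is where the gap is. The identity $\C(A,B)=\sum_{S,T}(-1)^{|S|+|T|}\C(X_S,Y_T)$ is formally correct (it is a finite sum), but it cannot be analysed by the per-configuration order count you describe, because the sum is violently non-absolutely convergent: a union subgraph $H$ consisting of a chain of $k$ ``diamonds'' contains $2^k$ distinct $a\to s$ paths and hence about $2^{2^k}$ admissible sets $S$ with union $H$, each contributing $\pm(c/n)^{e(H)}$ with $e(H)=O(k)$; summing absolute values over $k$ diverges for every fixed $c>0$ and $n$, so subcriticality alone does not make the remainder ``summable''. To make your scheme rigorous you must truncate the expansion, and the natural truncation is Bonferroni at level two, $X_A-\binom{X_A}{2}\le I_A\le X_A$ --- at which point you have reconstructed the paper's argument, including the need for a length cut-off (there are $(n-2)_{\ell-1}$ paths of length $\ell$, and the higher-moment bounds require the combinatorial factors $(\la+2)^{2\uv}$ etc.\ to stay polylogarithmic, which is exactly what $L=\log^2n$ buys). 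You correctly flag this bookkeeping as the main obstacle, but as written the proposal does not contain the mechanism that closes it; everything else, including the classification of dominant configurations and the final arithmetic, is sound and matches the paper.
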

In fact, the proof shows that \eqref{main} holds uniformly in $0\le c\le c'$
for any $c'<1$; moreover, 
we may (with just a little more care) for such $c$ 
write the error term as $O(c^4n^{-4})$.
This implies that for large $n$, the critical $p\approx2c_1/n$ is indeed the
first critical probability, and that the covariance is negative for all smaller
$p>0$. 

\begin{remark}
In a random orientation of any given graph $G$, it is a fact first observed by McDiarmid
 that $\P(a\to s)$ is equal to $\P(a\leftrightarrow s)$ in an edge percolation on the same graph with probability $1/2$ for each edge independently, see \cite{M81}. Hence the events $A$ (and thus $B$) have the same probability as
$\P(a\leftrightarrow s)$ in $G(n,p/2)$. With $p=2c/n$ it is well known that for $c<1$  
this probability is $\frac{c}{(1-c)}n\qw+O(n\qww)$, see e.g.\ \cite{JL}. 
Hence the covariance in \eqref{main} is of the order $O(\P(A)\P(B)/n)$.
\end{remark}

The outline of the proof is as follows, see Sections \refand{S:pf}{S:Lemmas}
for details. 

Let $p:=2c/n$, where $c<1$.
Let $\XA:=\#\{a\to s\}$ be the number of paths from $a$ to $s$ in \ggnp{}
and $\XB:=\#\{s\to b\}$. 
(In the proof below, for technical reasons, we actually
only count paths that are not too long.)
We first show that, in our range of $p$, the probability that $\XA\ge2$ or
$\XB\ge2$ is small, and that we can ignore these events and approximate
$\C(A,B)$ by $\C(\xa,\xb)$. The latter covariance is a double sum over pairs
of possible paths \gab, where $\ga$ goes from $a$ to $s$ and $\gb$ goes from
$s$ to $b$, and we show that the largest contribution comes from
configurations  of the following two types:

\begin{description}
\item [Type 1]
The two edges incident to $s$, \ie{} the last edge in $\ga$ and the first
edge in $\gb$, are the same but with opposite orientations; all other edges
are distinct.
See Figure \ref{F:Type1}.
\begin{figure}[htbp]
\includegraphics[width=5cm]{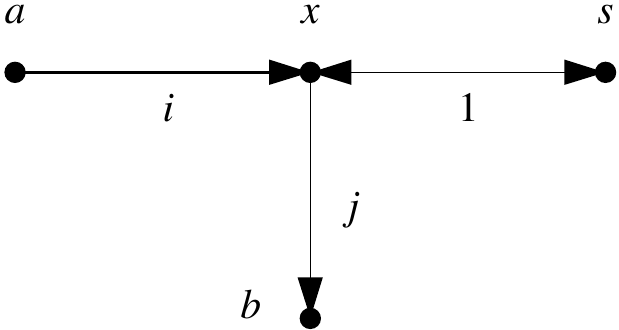}
\caption{Configurations of Type 1 ($i,j\ge0$, $i+j\ge1$).} 
\label{F:Type1}
\end{figure}

\item[Type 2]
$\ga$ and $\gb$ contain a common subpath with the same orientation, but
all other edges are distinct.
See Figure \ref{F:Type2}.
\begin{figure}[htbp]
\includegraphics[width=7cm]{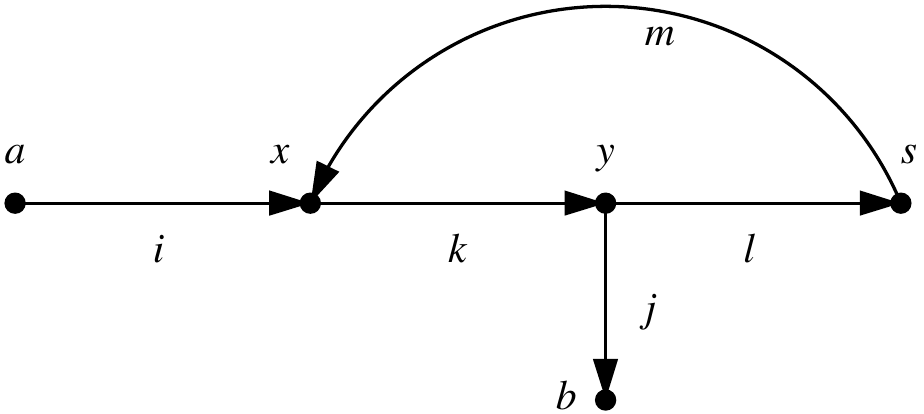}
\caption{Configurations of Type 2 ($i,j\ge0$, $k,l,m\ge1$).} 
\label{F:Type2}
\end{figure}

\end{description}

If \gab{} is of Type 1, then $\ga$ and $\gb$ cannot both be paths in \ggnp,
since they  contain an edge with opposite orientations. Thus each such pair
\gab{} gives a negative contribution to $\C(\XA,\XB)$.
Pairs of Type 2, on the other hand, give a positive contribution. It turns
out that both contributions are of the same order $\nn3$, see Lemmas
\refand{L:Type1}{L:Type2},
with constant factors depending on $c$ such that the negative contribution
from Type 1 dominates for small $c$, and the positive contribution from Type
2 dominates for larger $c$.

\begin{open}
It would be interesting to find a method to compute also the second critical
probability, which we in \cite{AJL} conjectured to be approximately
$\frac{7.5}{n}$. 
(The methods in the present paper apply only for $c<1$.)
Even showing that the covariance is negative when 
$p$ is of the order $\frac{\log n}{n}$ is open.
Moreover we conjecture that (for large $n$ at least) there are only three
critical probabilities, but that too is open.
\end{open}

\section{Proof of Theorem \ref{T:firstzero}}\label{S:pf}
We give here the main steps in the proof of \refT{T:firstzero},
leaving details to a sequence of lemmas in Section \ref{S:Lemmas}.

By a \emph{path} we mean a directed path $\gam=v_0e_1\cdots e_\ell v_\ell$ in
the complete graph $K_n$. 
We use the conventions that a path is self-avoiding,
i.e.\ has no repeated vertex, and that the length $|\gam|$
of a path is the number of edges in the path. 

We let $\gG$ be the set of all such paths and let, for two distinct vertices
$v$ and $w$, $\gG_{vw}$ be the subset of all paths from $v$ to $w$.

If $\gam\in\gG$, let $\ig$ be the indicator that $\gam$ is a path in \ggnp,
\ie{}, that all edges in $\gam$ are present in $\ggnp$ and have the correct
orientation there. Thus
\begin{equation}
  \label{a1}
\E\ig=\P(\ig=1)=\parfrac{p}{2}^{|\gam|}
= \parfrac{c}{n}^{|\gam|}.
\end{equation}

Let $\IA$ and $\IB$ be the indicators of $A$ and $B$. Note that the event
$A$ occurs if and only if
$\sum_{\ga\in\gG_{as}}\ia\ge1$, and similarly for $B$.

It will be convenient to restrict attention to paths that are not too long,
so we introduce a cut-off $L\=\log^2n$ and let $\ggl{vw}$ be the set of
paths in $\gG_{vw}$ of length at most $L$.
Let
\begin{align*}
  \xa\=\sum_{\ga\in\ggl{as}} \ia
&&
\text{and}
&&&
  \xb\=\sum_{\gb\in\ggl{sb}} \ib,
\end{align*}
\ie, the numbers of paths in \ggnp{} from $a$ to $s$ and from $s$ to $b$,
ignoring paths of length more than $L$.

Write $\XA=\iaa+\xaa$ and $\XB=\ibb+\xbb$, where
$\iaa$ and $\ibb$ are the indicators for the events $\xa\ge1$ and
$\xb\ge1$ respectively, so that 
\begin{align*}
  \iaa&=\min(\xa,1),
\\
\xaa&=(\xa-1)_+
=\begin{cases}0&\text{if }\XA\le1,\\
\XA-1&\text{if }\XA>1.\end{cases}
\end{align*}

We have $\IA\ge\iaa$. Let $\ja\=\IA-\iaa$
and $\jb\=\IB-\ibb$.
Thus
\begin{equation}\label{ijsplit}
\C(A,B)=\C(\IA,\IB)=\C(\iaa,\ibb)+\C(\iaa,\jb)+\C(\ja,\IB).
\end{equation}
We will show in \refL{L:L} below that the last terms are small:
$O(n\qwq)$. (The exponent 99 here and below can be replaced by any fixed
number.) 

Similarly, since $\iaa=\xa-\xaa$,
\begin{equation}\label{E:split}
\C(\iaa,\ibb)=\C(\xa,\xb)-\C(\xa,\xbb)-\C(\xaa,\xb)+\C(\xaa,\xbb),
\end{equation}
where \refL{L:rest} shows that the last three terms are $O(\nn4)$.
Hence, it suffices to compute
\begin{equation}
  \label{a3c}
\C(\XA,\XB)=\C\Bigpar{\sum_{\ga\in\ggl{as}}\ia,\sum_{\gb\in\ggl{sb}}\ib}
=\sum_{\ga\in\ggl{as}}\sum_{\gb\in\ggl{sb}}\Cov(\ia,\ib).
\end{equation}
Lemmas \ref{L:Type1} and \ref{L:Type2} yield the contribution to this sum
from pairs \gab{} of Types 1 and 2, and \refL{L:3} shows that the remaining
terms contribute only $O(\nn4)$. Using \eqref{ijsplit}--\eqref{a3c} and
the lemmas in \refS{S:Lemmas} we thus obtain
\begin{equation*}
\begin{split}
\C(A,B)&=\C(\iaa,\ibb)+O(n\qwq)
=\C(\xa,\xb)+O(\nn4)
\\&
=\left(-\frac{2c^3-c^4}{(1-c)^2}+\frac{c^3}{(1-c)^5}\right)\frac1{n^3}
+O\Big(\frac1{n^4}\Big),
\\&=\frac{c^3}{(1-c)^5}\cdot\Bigpar{1-(2-c)(1-c)^3}\frac1{n^3}
+O\Big(\frac1{n^4}\Big),
\end{split}  
\end{equation*}
which is \eqref{main}.

The polynomial $1-(2-c)(1-c)^3=-c^4+5c^3-9c^2+7c-1$ is negative for $c=0$
and has two real zeros, for example because its discriminant is $-283 <0$, see e.g.\ \cite{W1898}; a numerical calculation
yields the roots $c_1\approx0.180827$ and $c_2\approx2.380278$, 
which completes the proof.
\qed

\section{Lemmas}\label{S:Lemmas}

We begin with some general considerations.
We assume, as in \refT{T:firstzero}, that $p=2c/n$ and  $0\le c<1$.

Consider a term $\C(\ia,\ib)$ in \eqref{a3c}.
Suppose that $\ga$ and $\gb$ have lengths $\la$ and $\lb$.
Furthermore, suppose that $\gb$ contains $\lba\ge0$ edges not in $\ga$
(ignoring the orientations) and that these form $\uv\ge0$ subpaths of
$\gb$ that intersect $\ga$ only at the endvertices. 
(We will use the notation $\gb\setminus\ga$ for the set of (undirected)
edges in $\gb$ but not in $\ga$.) 
The number $\lab$ of edges common to $\ga$ and $\gb$ 
(again ignoring orientations)
is thus $\lb-\lba$.
By \eqref{a1},
$\E\ia=(c/n)^{\la}$
and
$\E\ib=(c/n)^{\lb}$.

\begin{romenumerate}  
\item 
If $\ga$ and $\gb$ have no common edge, then $\ia$ and $\ib$ are independent
and 
\begin{equation}\label{cov0}
\C(\ia,\ib)=0.  
\end{equation}

\item 
If
all common edges have the same
orientation in $\ga$ and $\gb$, then
\begin{equation}\label{cov+}
\C(\ia,\ib)=  \E(\ia\ib)-\E\ia\E\ib=\parfrac{c}{n}^{\la+\lba}
-\parfrac{c}{n}^{\la+\lb}.
\end{equation}

\item 
If some common edge has different orientations in $\ga$ and
$\gb$, then
$\E(\ia\ib)=0$
and
\begin{equation}\label{cov-}
  \C(\ia,\ib)=-\E\ia\E\ib
=-\parfrac{c}{n}^{\la+\lb}.
\end{equation}
\end{romenumerate}

We denote the falling factorials by $(n)_\ell\=n(n-1)\dotsm(n-\ell+1)$.
Note that the total number of paths of length $\ell$ in $\gG_{vw}$ is
$(n-2)_{\ell-1}\=(n-2)\dotsm(n-\ell)$, since the path is determined by
choosing $\ell-1$ internal vertices in order, and all vertices are distinct.

\begin{lemma}
  \label{L:L}
$\Cov(\iaa,\jb)=O(n\qwq)$ and $\Cov(\ja,\IB)=O(n\qwq)$.
\end{lemma}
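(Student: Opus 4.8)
The plan is to bound the two covariances by showing that the ``extra'' indicators $\ja=\IA-\iaa$ and $\jb=\IB-\ibb$ are essentially zero except on events of vanishingly small probability. First I would observe that $\ja=\IA-\iaa$ is nonnegative and in fact an indicator: since $\IA\ge\iaa$ always, $\ja\in\{0,1\}$, and $\ja=1$ precisely when $A$ occurs but $\xa=0$, i.e.\ when $a\to s$ holds only via paths of length exceeding the cut-off $L=\log^2 n$ (all such paths have more than $L$ edges). So $\ja\le\sum_{\ga\in\gG_{as},\,|\ga|>L}\ia$, and likewise $\jb\le\sum_{\gb\in\gG_{sb},\,|\gb|>L}\ib$.

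Next I would estimate $\E\ja$ (equivalently $\P(\ja=1)$) directly via a union bound over long paths. Using \eqref{a1} and the count $(n-2)_{\ell-1}\le n^{\ell-1}$ of paths of length $\ell$ from $a$ to $s$, we get
\begin{equation*}
\E\ja\le\sum_{\ell>L}(n-2)_{\ell-1}\parfrac{c}{n}^{\ell}
\le\frac1n\sum_{\ell>L}c^{\ell}
=\frac{c^{\lfloor L\rfloor+1}}{n(1-c)}.
\end{equation*}
Since $0\le c<1$ is fixed and $L=\log^2 n\to\infty$, the factor $c^{L}=\exp(-(\log(1/c))\log^2 n)$ decays faster than any fixed power of $n$; hence $\E\ja=O(n^{-99})$ (indeed $O(n^{-K})$ for every fixed $K$), and similarly $\E\jb=O(n^{-99})$.

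Finally I would convert the mean bound into the covariance bound. For any two bounded random variables one has $|\C(X,Y)|\le\E|XY|+\E|X|\,\E|Y|$. Here $\iaa$ and $\IB$ are both indicators (so bounded by $1$), and $\ja$ is an indicator. Thus
\begin{equation*}
|\C(\ja,\IB)|\le\E(\ja\IB)+\E\ja\,\E\IB\le\E\ja+\E\ja=2\,\E\ja=O(n^{-99}),
\end{equation*}
using $\ja\IB\le\ja$ and $\E\IB\le1$; the same argument with the roles swapped (and $\iaa\le1$) gives $|\C(\iaa,\jb)|\le 2\,\E\jb=O(n^{-99})$. I do not expect any genuine obstacle here: the only point requiring slight care is making the union bound over \emph{all} long paths (not just self-avoiding ones of a single length) legitimate, which is handled cleanly by summing the geometric series in $\ell$ and invoking that $c<1$ makes the tail superpolynomially small once $\ell>\log^2 n$. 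The choice $L=\log^2 n$ is exactly what makes $c^L$ beat every fixed negative power of $n$, so the exponent $99$ (and any larger constant) follows immediately.
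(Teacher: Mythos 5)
Your proof is correct and follows essentially the same route as the paper's: bound $\ja$ by the sum of indicators over paths of length exceeding $L$, use the path count $(n-2)_{\ell-1}\le n^{\ell-1}$ together with \eqref{a1} to get $\E\ja=O(c^L)=O(n^{-99})$, and then pass to the covariance via $|\C(\ja,\IB)|\le 2\E\ja$. No issues.
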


\begin{proof}
$\ja$ is
  the indicator of the event that there is a path in \ggnp{} from $a$ to
  $s$, and that every such path has length $>L=\log^2 n$.
Thus, 
\begin{equation*}
  0\le\ja\le\sum_{\ga\in\gG_{as},\,|\ga|>L}\ia
\end{equation*}
and thus, using \eqref{a1} and the fact that there are $(n-2)_{\ell-1}\le
n^{\ell-1}$ paths of length $\ell$ in $\gG_{as}$,
\begin{equation*}
  0\le\E\ja\le
\sum_{\ga\in\gG_{as},\,|\ga|>L}\parfrac{c}{n}^{|\ga|}
\le\sum_{\ell=L}^\infty n^{\ell-1}\parfrac{c}{n}^{\ell}
\le\sum_{\ell=L}^\infty c^{\ell}
=O(c^L)=O(n\qwq).
\end{equation*}
Since $\ja,\ib\in[0,1]$,
\begin{equation*}
  |\C(\ja,\IB)|\le\E(\ja\IB)+\E\ja\E\IB\le2\E\ja=O(n\qwq).
\end{equation*}
Similarly,
$|\C(\iaa,\jb)|=O(n\qwq)$.
\end{proof}

\begin{lemma}\label{L:Type1}
Pairs of Type 1 contribute
$-\frac{1}{n^3}\frac{2c^3-c^4}{(1-c)^2}+O(\frac1{n^4})$ to the covariance
$\C(\XA,\XB)$.
\end{lemma}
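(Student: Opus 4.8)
The plan is to enumerate all Type 1 configurations and sum their (negative) covariance contributions using the formulas established in the general considerations. Recall that in a Type 1 pair, the last edge of $\ga$ and the first edge of $\gb$ are the same undirected edge $\{v,s\}$ but with opposite orientations (namely $v\to s$ in $\ga$ and $s\to v$ in $\gb$), while all other edges of $\ga$ and $\gb$ are distinct. Since this is precisely the situation of a common edge with differing orientations, case (iii) applies and each such pair contributes $-(c/n)^{\la+\lb}$ to $\C(\xa,\xb)$ by \eqref{cov-}. So the whole task reduces to counting, for each pair of lengths, how many Type 1 configurations exist, and then summing $-(c/n)^{\la+\lb}$ over all of them.

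First I would set up the counting. Writing $\la=i+1$ and $\lb=j+1$ as in Figure~\ref{F:Type1} (so $i$ is the number of edges in $\ga$ before reaching $v$, and $j$ the number in $\gb$ after leaving $v$), the constraints are $i,j\ge0$ with $i+j\ge1$ (the case $i=j=0$ would force $a=v=b$, which is excluded). The number of ways to realize such a configuration is the number of choices for the shared vertex $v$ together with the $i-1$ internal vertices of $\ga$ and the $j-1$ internal vertices of $\gb$, all distinct from each other and from the fixed vertices $a,s,b$. This is a falling-factorial count of the form $(n-3)_{i+j-1}$ or similar (the exact offset depending on how many of $a,s,b$ are already committed); the dominant term is $n^{i+j-1}$. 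I would then form the double sum
\begin{equation*}
-\sum_{\substack{i,j\ge0\\ i+j\ge1}} (\text{count})\cdot\parfrac{c}{n}^{i+j+2},
\end{equation*}
extract the leading power of $n$, and recognize the resulting series in $c$ as a geometric-type sum that collapses to a closed form.

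The key computation is to identify the leading order. Each configuration carries a factor $(c/n)^{i+j+2}$ and a vertex count of order $n^{i+j-1}$, giving a net order $n^{i+j-1}\cdot n^{-(i+j+2)}=n^{-3}$, uniformly in $i,j$ — this is exactly why all Type 1 terms land at order $n^{-3}$. So the leading coefficient is $-\sum_{i+j\ge1} c^{i+j+2}$ with appropriate combinatorial weights from the precise falling factorial, and I expect this to sum to $\frac{2c^3-c^4}{(1-c)^2}$, with the $(1-c)^{-2}$ arising from differentiating or squaring the geometric series $\sum_k c^k=(1-c)^{-1}$. I would separately justify that the cut-off $L=\log^2 n$ does not affect the leading term: the truncated tail contributes only $O(c^L)=O(n^{-99})$ by the same geometric bound used in \refL{L:L}, so replacing $\ggl{as},\ggl{sb}$ by the full path sets is free at order $n^{-3}$.

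The main obstacle is bookkeeping rather than any single hard idea: I must make sure the falling-factorial vertex counts correctly account for the overlaps among $\{a,s,b,v\}$ and the internal vertices (so that no vertex is double-counted and the self-avoidance of each path is respected), and then confirm that all the genuinely lower-order corrections — the gap between $(n-3)_{i+j-1}$ and $n^{i+j-1}$, and the tail beyond the cut-off — are absorbed into $O(n^{-4})$. The delicate point is verifying that summing the $O(n^{-4})$ per-configuration corrections over the (exponentially many, but $c$-geometrically weighted) index pairs $(i,j)$ still yields a total of only $O(n^{-4})$; this follows because the geometric decay in $c<1$ keeps the sum $\sum_{i+j}(i+j)\,c^{i+j}$ convergent, but it needs to be stated carefully to make the uniformity claim in $0\le c\le c'$ rigorous.
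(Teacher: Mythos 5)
Your proposal follows essentially the same route as the paper: apply \eqref{cov-} to each Type 1 pair, count the configurations for fixed $(i,j)$ as $n^{i+j-1}\bigpar{1+O((i+j)^2/n)}$, observe that every term lands at order $n^{-3}$, and sum the geometric series $-\sum_{i+j\ge1}c^{i+j+2}=-\bigpar{c^2(1-c)^{-2}-c^2}=-\frac{2c^3-c^4}{(1-c)^2}$, with the falling-factorial corrections and the cut-off tail absorbed into $O(n^{-4})$ exactly as in the paper. The details you leave as "bookkeeping" (the precise falling factorials, the split into the $j=0$ and $j\ge1$ cases, and the convergence of $\sum(i+j)^2c^{i+j}$) are handled the same way in the paper's proof, so the plan is correct and complete in outline.
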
 
\begin{proof}
Let the path $\ga$ from $a$ to $s$ consist of $i+1$ edges, where the last
edge is the first in the path $\gb$ of length $j+1$ from $s$ to $b$, see
Figure \ref{F:Type1}.
The paths must not share any more edges, but could have more common
vertices. 
Here $i,j\ge0$ and $i+j\ge1$ since $a\neq b$.
Let $R_{i,j}$ be the number of such pairs of paths, for 
given $i$ and $j$. 
If $j\ge1$, 
the paths are determined by the choice of $i$ distinct
vertices for $\ga$ 
and then $j-1$ distinct vertices for $\gb$;
if $j=0$, then $i\ge1$ and the paths are determined by the choice of $i-1$
distinct vertices for $\ga$.
Order is important so, for $i,j\le L$, 
with a minor modification if $j=0$, 
\begin{equation*}
(n-2)_{i}\cdot (n-3)_{j-1}\ge R_{i,j}\ge
(n-2)_{i+j-1}, 
\end{equation*}
Thus $R_{i,j}= n^{i+j-1}\Bigpar{1+O\Bigparfrac{(i+j)^2}n}$ and
summing over all such pairs \gab{} gives by \eqref{cov-}
a contribution to $\C(\XA,\XB)$ of
\begin{align*}
-\sum_{i+j\ge 1}&R_{i,j}\Big(\frac cn\Big)^{i+j+2}
=
-\sum_{\substack{i+j\ge 1\\i,j\le L}}
n^{i+j-1}\Bigpar{1+O\Bigparfrac{(i+j)^2}n}
\Big(\frac cn\Big)^{i+j+2}
\\&
=
-\sum_{\substack{i+j\ge 1\\i,j\le L}} c^{i+j+2}\nn3
+\sum_{i+j\ge 1}O\bigpar{(i+j)^2} c^{i+j+2}\nn4
\\&
=-\nn3\Bigpar{
2\sum_{j\ge 1}c^{j+2}+\sum_{i,j\ge 1}c^{i+j+2}+O(c^L)}+\onn4
\\
&=-\nn3\left(\frac{2c^3}{1-c}+\frac{c^4}{(1-c)^2}\right)+\onn4
=-\nn3\cdot\frac{2c^3-c^4}{(1-c)^2}+\onn4.
\qedhere
\end{align*}
\end{proof}

\begin{lemma}\label{L:Type2}Type 2 pairs contribute
  $\frac1{n^3}\cdot\frac{c^3}{(1-c)^5}+O\big(\frac1{n^4}\big)$ to the
  covariance $\C(\XA,\XB)$.
\end{lemma}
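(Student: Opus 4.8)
The plan is to mimic the structure of the proof of \refL{L:Type1}, counting Type 2 configurations and weighting them by the positive covariance formula \eqref{cov+}. Referring to Figure \ref{F:Type2}, a Type 2 pair \gab{} consists of a common subpath with the same orientation of length $k\ge1$, flanked by a portion of $\ga$ before the common part (length $i\ge0$) running into it, and a portion afterwards; meanwhile $\gb$ branches off with its own private segments of lengths $l,m\ge1$. The key point is that here all common edges share orientation, so by \eqref{cov+} each such pair contributes $(c/n)^{\la+\lba}-(c/n)^{\la+\lb}$, where the first term dominates: the leading contribution is governed by $\E(\ia\ib)=(c/n)^{\la+\lba}$, the subtracted product $\E\ia\E\ib=(c/n)^{\la+\lb}$ being of strictly higher order in $1/n$ (since $\la+\lb>\la+\lba$ whenever there is any common edge) and hence absorbed into the error.

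First I would set up the enumeration. With the five parameters $i,j\ge0$ and $k,l,m\ge1$ of Figure \ref{F:Type2}, I would identify the total number of edges in $\ga$, in $\gb$, and in $\gb\setminus\ga$, so as to compute the exponents $\la$ and $\lba$ appearing in \eqref{cov+}. Then, exactly as in \refL{L:Type1}, I would count the number $R$ of pairs of paths realizing each parameter choice by picking the requisite internal vertices in order, all distinct. This gives a falling-factorial count sandwiched between $(n-2)_{\cdots}$-type bounds, yielding $R=n^{(\text{number of free internal vertices})}\bigpar{1+O((\cdots)^2/n)}$, with the cut-off $L=\log^2 n$ ensuring the correction factors are uniformly $1+o(1)$.

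Next I would form the sum $\sum R\cdot(c/n)^{\la+\lba}$ over all admissible parameter tuples, as in the displayed computation in \refL{L:Type1}. The $n$-powers should cancel to leave a net factor $\nn3$ multiplying a convergent multi-geometric series in $c$; summing over $i,j\ge0$ and $k,l,m\ge1$ and using $0\le c<1$, I expect the geometric sums to collapse to the stated $\frac{c^3}{(1-c)^5}$ (the fifth power plausibly arising from summing over the five independent parameters, with the $c^3$ from the three segments forced to have length $\ge1$). The subtracted terms $-(c/n)^{\la+\lb}$ and the $O((\cdots)^2/n)$ relative errors, summed against $c^{(\cdots)}$, both contribute only $O(\nn4)$ by the same geometric-tail estimates, and the tails beyond the cut-off $L$ give $O(c^L)=O(n\qwq)$.

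The main obstacle will be getting the combinatorial bookkeeping exactly right: correctly reading off from Figure \ref{F:Type2} how the five parameters determine $\la$, $\lba$, and the number of free internal vertices, and in particular handling the degenerate boundary cases $i=0$ or $j=0$ (where a flanking segment is absent, so certain endvertices coincide with $a$, $b$, or $s$) with the same ``minor modification'' caveat used in \refL{L:Type1}. Once the exponents are pinned down, verifying that the $n$-powers balance to produce precisely $\nn3$ (rather than a stray factor) is the crucial sanity check; the remaining algebra is then a routine product of geometric series. I would double-check the final constant by confirming that it combines with the Type 1 constant from \refL{L:Type1} to give the coefficient $\bigpar{1-(2-c)(1-c)^3}\frac{c^3}{(1-c)^5}$ asserted in \eqref{main}.
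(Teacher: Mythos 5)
Your proposal follows essentially the same route as the paper: parametrize Type 2 pairs by $i,j\ge0$ and $k,l,m\ge1$, apply \eqref{cov+} so that each pair contributes $(c/n)^{i+j+k+l+m}\bigpar{1-(c/n)^k}$, count configurations as $n^{i+j+k+l+m-3}\bigpar{1+O((i+j+k+l+m)^2/n)}$, and collapse the resulting geometric sums to $\frac{c^3}{(1-c)^5}\nn3$ with the subtracted term and relative errors absorbed into $O(\nn4)$. The only quibble is a labeling slip (per Figure \ref{F:Type2}, the segments forced to have length $\ge1$ are the common part $k$ and the two edges-at-$s$ segments $l$ of $\ga$ and $m$ of $\gb$, not two private segments of $\gb$), but since your computation depends only on the total $i+j+k+l+m$ and on which three parameters are $\ge1$, this does not affect the outcome.
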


\begin{proof}
A pair \gab{} of paths of Type 2 must contain a directed cycle containing
$s$, from which there are $m\ge1$ edges to a vertex $x$ to which there is a  
directed path of length $i\ge 0$ from $a$. 
The cycle continues from $x$ with $k\ge1$ edges to a vertex $y$, which
connects to $b$ via $j\ge0$ edges. The cycle is completed by $l\ge1$ edges
from $y$ to $s$, see Figure \ref{F:Type2}. 
By \eqref{cov+}, then
\begin{equation}
  \C(\ia,\ib)=\Bigparfrac cn^{i+j+k+l+m}\lrpar{1-\Bigparfrac cn^k}.
\end{equation}

Let $R_{i,j,k,l,m}$ be the number of such pairs \gab{} with given
$i,j,k,l,m$.
The path $\ga$ is determined by $i+k+l-1$ distinct vertices and given $\ga$,
if $j\ge1$, then the path $\gb$ is determined 
by choosing $m+j-2$
vertices; 
if $j=0$ then $b$ lies on $\ga$, so $\ga$ is determined by choosing
$i+k+l-2$ vertices, and then $\gb$ is determined by choosing $m-1$ further
vertices. 
Reasoning as in the proof of \refL{L:Type1} we have 
\begin{equation*}
R_{i,j,k,l,m}=
n^{i+j+k+l+m-3}\Bigpar{1+O\Bigparfrac{(i+j+k+l+m)^2}n}. 
\end{equation*}
Due to our cut-off, we have to have $i+k+l\le L$ and $j+k+m\le L$, but we
may for simplicity here allow also paths $\ga,\gb$ with lengths larger than
$L$; the contribution below from pairs with such $\ga$ or $\gb$ is
$O(c^L)=O(n\qwq)$. 
Summing over all possible configurations gives
\begin{align*}
\sum_{i,j\ge0,\,k,l,m\ge1}&R_{i,j,k,l,m}\Big(\frac cn\Big)^{i+j+k+l+m}
\cdot\left(1-\Big(\frac cn\Big)^{k}\right)\\
&=\frac1{n^3}\cdot\sum_{i,j\ge0,\,k,l,m\ge1}c^{i+j+k+l+m}
 \cdot\left(1-\Big(\frac cn\Big)^{k}\right)+O\Big(\frac1{n^4}\Big)\\
&=\frac1{n^3}\cdot\frac{c^3}{(1-c)^5}+O\Big(\frac1{n^4}\Big).
\qedhere
\end{align*}
\end{proof}

\begin{lemma}\label{L:3}
The sum $\sum|\C(\ia,\ib)|$ over all pairs \gab{} with
$\ga\in\ggl{as}$,
$\gb\in\ggl{sb}$
and \gab{} not of Type 1 or 2 is $\onn4$.
\end{lemma}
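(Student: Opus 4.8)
The plan is to group the pairs \gab{} according to the combinatorial type of their union graph $H\=\ga\cup\gb$ (regarded as an undirected graph in $K_n$), and to show that Types 1 and 2 are exactly the types that contribute at order $\nn3$, every other type contributing $\onn4$. By the general considerations \eqref{cov0}--\eqref{cov-}, only pairs of case (ii) or case (iii) can occur, since otherwise $\C(\ia,\ib)=0$. For such a pair write $v(H)$ for the number of vertices and $e(H)=\la+\lba$ for the number of (undirected) edges of $H$; as $H$ is connected (both paths pass through $s$), its cyclomatic number $r\=e(H)-v(H)+1$ is $\ge0$. The two estimates that drive everything are: the number of pairs realising a fixed abstract type is at most $n^{v(H)-3}$, since the three distinct vertices $a,s,b$ are fixed and the remaining $v(H)-3$ are free; and by \eqref{cov+}--\eqref{cov-}, $|\C(\ia,\ib)|\le(c/n)^{e(H)}$ in case (ii) while $|\C(\ia,\ib)|=(c/n)^{e(H)+\lab}$ in case (iii). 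Since $v(H)-e(H)=1-r$, multiplying shows that a case-(ii) type contributes $O\bigpar{c^{e(H)}\,n^{-2-r}}$ and a case-(iii) type contributes $O\bigpar{c^{e(H)+\lab}\,n^{-2-r-\lab}}$.

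Thus the exponent of $n$ is $-2-r$ in case (ii) and $-2-r-\lab$ in case (iii), and I would next show that the value $-3$ is attained precisely by Types 1 and 2. The decisive point is that $\gb$ emanates from $s\in\ga$. In case (ii) the first edge of $\gb$ cannot be a common edge: the only edge of $\ga$ at $s$ points into $s$, so sharing it would reverse its orientation and put us in case (iii). Hence $\gb$ leaves $\ga$ at $s$, and its first return to $\ga$ (which must happen, as there is a common edge) closes a cycle through $s$, forcing $r\ge1$; and if $r=1$ then the common edges form a single contiguous subpath traversed in the common direction, which is exactly the Type 2 configuration of \refF{F:Type2}. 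In case (iii) with $r=0$ and $\lab=1$ the same return argument forces the unique common edge to be incident to $s$, and there it is automatically shared with opposite orientation, which is exactly Type 1. Consequently every pair that is neither Type 1 nor Type 2 has $r\ge2$ (in case (ii)) or $r+\lab\ge2$ (in case (iii)), and so its type contributes at order $\le\nn4$.

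It remains to sum these bounds. I would suppress the degree-$2$ vertices of $H$ to obtain a reduced multigraph $\widehat H$ on $a,s,b$ together with the vertices of degree $\ge3$; for fixed $r$ this $\widehat H$ has $O(r)$ edges, so there are only finitely many reduced shapes. Each shape is recovered by assigning a length $\ge1$ to every edge of $\widehat H$, and $e(H)$ is the sum of these lengths, so summing $c^{e(H)}$ over all length assignments is a finite product of convergent geometric series $\sum_{\ell\ge1}c^\ell=c/(1-c)$, uniformly for $0\le c\le c'<1$. The cut-off $L$ bounds all lengths, hence also $r$, making every sum finite; summing the case-(ii) bound over $r\ge2$ and the case-(iii) bound over $r+\lab\ge2$ is then geometric in $\nn1$ and is dominated by the terms with $r=2$, respectively $r+\lab=2$, each of order $\nn4$. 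This gives the claimed $\onn4$.

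The main obstacle is the middle paragraph: proving rigorously that Types 1 and 2 are the \emph{only} types of order $\nn3$ requires the full topological case analysis of how the self-avoiding path $\gb$ can meet $\ga$, combining the orientation constraint at $s$ with the identity $r=e(H)-v(H)+1$. Once this dichotomy is in place, the summation in the last paragraph is routine bookkeeping.
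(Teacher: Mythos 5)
Your proof is correct and, while it rests on the same basic counting estimate as the paper's (at most $n^{\#\{\text{free vertices}\}}$ realisations of a given configuration, each weighted by $(c/n)^{\#\{\text{edges}\}}$, or by $(c/n)^{\la+\lb}$ in the opposite-orientation case), it organises the case analysis quite differently. The paper parametrises pairs by $(\la,\lba,\uv)$, where $\uv$ is the number of excursions of $\gb$ off $\ga$, obtains the bound $\binom{\lba-1}{\uv-1}(\la+2)^{2\uv}c^{\la+\lba}n^{-\uv-1}$, and then works through $\uv\ge4$, $\uv=3$, $\uv=0$ and $\uv\in\{1,2\}$ (split by orientation), extracting an extra factor $n^{-1}$ by hand in the sub-cases where $b$ is forced to lie on $\ga$. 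Your cyclomatic number satisfies $r=\uv-[b\notin\ga]$, so your exponent $-2-r$ equals the paper's $-\uv-1$ when $b\notin\ga$ and is one better when $b\in\ga$; this absorbs the paper's ad hoc ``$b\in\ga$'' sub-cases into a single clean dichotomy ($r\ge2$ in the same-orientation case, $r+\lab\ge2$ in the opposite-orientation case), and your tree/unicyclic argument identifying Types 1 and 2 as precisely the $r+\lab\le1$ and $r\le1$ configurations is a genuine structural statement that the paper only establishes implicitly, case by case. What your version buys is a more conceptual proof of the minimality of Types 1 and 2; what it costs is that the number of reduced shapes $\widehat H$ grows super-exponentially in $r$, so your final sum over $r$ is not literally geometric in $n^{-1}$ --- you need the same observation the paper uses for $\uv\ge4$, namely that the cut-off forces $r\le L=\log^2 n$ and the shape count is $r^{O(r)}=n^{o(r)}$ uniformly in this range, so the $r=2$ (resp.\ $r+\lab=2$) terms still dominate. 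That point is routine but should be spelled out; with it, the argument is complete.
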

\begin{proof}
Consider pairs $\gab$ with some given $\la,\lba,\uv$. 
The path $\ga$, which has $\la-1$ interior vertices, may
be chosen in $\le n^{\la-1}$ ways. The $2\uv$ endvertices of the $\mu$ 
subpaths of 
$\gb\setminus\ga$ 
are either $b$ or lie on $\ga$, and given $\ga$, these may be chosen (in order) in $\le (\la+2)^{2\uv}$ ways. 
The $\lba-\uv$ internal vertices in the subpaths can be chosen in $\le
n^{\lba-\uv}$ ways. They can be  
distributed in $\binom{\lba-1}{\uv-1}$ 
(interpreted as 1 if $\uv=\lba=0$) 
ways over the subpaths. 
The path $\gb$ is determined by these endvertices, 
the sequence of
$\lba-\uv$ interior vertices in the subpaths between these endvertices and which vertices belong to which subpath; hence the total number of choices of $\gb$ is
$\le\binom{\lba-1}{\uv-1}(\la+2)^{2\uv}n^{\lba-\uv}$. 

For each such pair $\gab$, we have by \eqref{cov0}--\eqref{cov-}
$|\C(\ia,\ib)|\le (c/n)^{\la+\lba}$. Consequently,
the total contribution  to $\sum|\C(\ia,\ib)|$ from the paths with given
$\la,\lba,\uv$ is at most
\begin{equation}
  \label{c1}
\binom{\lba-1}{\uv-1}(\la+2)^{2\uv}n^{\la-1+\lba-\uv}\parfrac{c}{n}^{\la+\lba}
=\binom{\lba-1}{\uv-1}(\la+2)^{2\uv}c^{\la+\lba} n^{-\uv-1}.
\end{equation}

We consider several different cases and show that each case yields a
contribution $\onn4$, noting that we may assume that
$\lb>\lba$, since otherwise $\ga$ and $\gb$ are edge-disjoint, and thus 
$\C(\ia,\ib)=0$ by \eqref{cov0}.

\begin{romx}
\item {$\uv\ge4$}
Using that $\binom{\lba-1}{\uv-1}\le\lba^{\uv-1}\le L^{\uv}$, and summing \eqref{c1} over $\lba\ge0$ and $\la\le L$, yields for a fixed $\uv$ a contribution
\begin{equation}\label{c2}
\le
(L+2)^{3\uv}(1-c)^{-2}n^{-\uv-1},
\end{equation}
and the sum of these for $\uv\ge4$ is
\begin{equation}
O\bigpar{L^{12}n^{-5}}	
=
O\bigpar{n^{-5}\log^{24} n}	
=\onn4.
\end{equation}

\item{$\uv=3$}
Using that, with $\uv=3$, $\binom{\lba-1}{\uv-1}=\binom{\lba-1}2\le\lba^2$, 
and summing \eqref{c1} 
over all $\la,\lba\ge0$ yields a contribution of at most
\begin{equation}
\sum_{\la,\lba\ge0}\lba^2(\la+2)^6 c^{\la+\lba} n^{-4}
\le
\sum_{\la\ge0}(\la+2)^6 c^{\la}
\sum_{\lba\ge0}\lba^2 c^{\lba} n^{-4}
=\onn4.
\end{equation}
It remains to consider $\uv\le2$.

\item{$\uv=0$}
In this case, $\gb\subset\ga$, and thus
$\lba=0$ and $\la>\lb$ (because $a\neq b$). 
Given $\la$ and $\lb$, we can choose $\gb$ in $\le n^{\lb-1}$ ways and then
$\ga$ in $\le n^{\la-\lb-1}$ ways; for each choice
\eqref{cov-} applies since the edges in $\gb$ have opposite orientations in
$\ga$, and thus the contribution to $\sum|\C(\ia,\ib)|$ is at most
\begin{equation}\label{kaw}
 n^{\lb-1+\la-\lb-1}\parfrac{c}{n}^{\la+\lb}
=
{c}^{\la+\lb}n^{-\lb-2}.
\end{equation}
If $\lb=1$, then $\gab$ is of Type 1, see \refF{F:Type1} ($j=0$).
Since we have excluded such pairs, we
may thus assume that $\lb\ge2$.
Summing \eqref{kaw} over $\la>\lb\ge2$ yields $\onn4$.

\item {$\uv\in\set{1,2}$ and $\ga$ and $\gb$ have some common edge with opposite orientations} \label{l3d}
In this case, \eqref{cov-} applies, and $\binom{\lba-1}{\uv-1}\le\lba\le\lb$. Thus, if we let $\lab=\lb-\lba\ge1$ be the
number of common edges in $\ga$ and $\gb$, then the total contribution 
to $\sum|\C(\ia,\ib)|$ for
given $\la,\lb,\uv,\lab$ (which determine $\lba=\lb-\lab$)
is at most, 
in analogy with \eqref{c1} but using \eqref{cov-}, 
\begin{equation}
  \label{j1}
\lb(\la+2)^{2\uv}n^{\la-1+\lba-\uv}\parfrac{c}{n}^{\la+\lb}
=
(\la+2)^{2\uv}\lb\,c^{\la+\lb} n^{-1-\lab-\uv}.
\end{equation}
For fixed $\uv$, the sum of \eqref{j1} over 
$\la,\lb\ge1$ and $\lab\ge3-\uv$ is $\onn4$, so we
only have to consider $1\le\lab\le2-\uv$. In this case we must have $\uv=1$
and $\lab=1$ (and $\binom{\lba-1}{\uv-1}=1$); thus $\ga$ and $\gb$ have exactly one common edge, which is
adjacent to one of the endvertices of $\gb$. If the common edge is adjacent to
$s$, we have a pair $\gab$ of Type 1, see \refF{F:Type1};
we may thus assume that the common edge is not adjacent to $s$. 
Then, $\lb\ge2$ and the common edge  is adjacent to $b$, which implies 
$b\in\ga$. 
Given $\la$, 
the number of paths $\ga$ that pass through $b$
is $(\la-1)(n-3)_{\la-2}$, since $b$ may be any of the  $\la-1$ interior
vertices. 
The choice of $\ga$ fixes the last interior vertex of $\gb$ (as the
successor of $b$ in $\ga$), and the remaining $\lb-2$ interior vertices
may be chosen in $\le n^{\lb-2}$ ways. The total contribution from this
case is thus at most
\begin{equation}
 (\la-1)n^{\la-2+\lb-2}\parfrac{c}{n}^{\la+\lb}
=
(\la-1)c^{\la+\lb} n^{-4},
\end{equation}
and summing over $\la$ and $\lb$ we again obtain $\onn4$.

\item{$\uv\in\set{1,2}$ and all common edges in $\ga$ and $\gb$ have the same
  orientation}  
The edge in $\gb$ at $s$ does not belong to $\ga$ (since it would have opposite orientation there), so one of the $\uv$ subpaths of $\gb$ outside $\ga$
begins at $s$. If $\uv=1$, or if $\uv=2$ and $b\notin\ga$,
then $\gab$ is of Type 2, see \refF{F:Type2}
($j=0$ and $j\ge1$, respectively). We may thus assume that $\uv=2$ and $b\in\ga$. 
As in case \ref{l3d}, given $\la$,  
we may choose $\ga$ in
$(\la-1)(n-3)_{\la-2}\le \la n^{\la-2}$ ways. The $\uv=2$ subpaths of $\gb$ 
outside $\ga$ 
have 4 endvertices belonging to $\ga$; one is $s$ and the others may be chosen
in $\le \lax^3$ ways. For any such choice, the remaining $\lba-2$ vertices of
$\gb$ may be chosen in $\le n^{\lba-2}$ ways. The total contribution 
for given $\la$ and $\lba$ is thus, using \eqref{cov+},
at most 
\begin{equation}
\lax^{4}n^{\la-2+\lba-2}\parfrac{c}{n}^{\la+\lba}
=
\lax^{4}c^{\la+\lba} n^{-4},
\end{equation}
and summing over all $\la,\lba$ we obtain $\onn4$.
\end{romx}
\end{proof}

\begin{lemma} \label{L:rest} With notation as before, we have
$\C(\XA,\xbb)=\C(\xaa,\XB)=O(n^{-4})$ and
  $\C(\xaa,\xbb)=O(n^{-4})$.
\end{lemma}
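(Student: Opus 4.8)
The plan is to bound each of the three covariances $\C(\XA,\xbb)$, $\C(\xaa,\XB)$, and $\C(\xaa,\xbb)$ separately, exploiting that $\xaa=(\XA-1)_+$ and $\xbb=(\XB-1)_+$ are supported on the rare events $\{\XA\ge2\}$ and $\{\XB\ge2\}$. The key observation is that having two paths from $a$ to $s$ forces at least two vertex-disjoint (or nearly disjoint) path structures, which is an order of magnitude less likely than a single path. More precisely, I would first establish the elementary bound $\C(U,V)\le\E(UV)+\E U\,\E V$ for nonnegative $U,V$, reducing everything to estimating expectations like $\E(\XA\xbb)$, $\E(\xaa\XB)$, and $\E(\xaa\xbb)$ together with products of individual expectations.

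First I would handle $\E\xaa$ and $\E\xbb$. Since $\xaa=\sum_{\ga}\ia-\iaa\le\binom{\xa}{2}$ bounds the excess count crudely, or more directly $\xaa\ge1$ only when there exist two distinct paths $\ga,\ga'\in\ggl{as}$ both present, I would write $\E\xaa\le\E\binom{\XA}{2}=\sum_{\ga\ne\ga'}\E(\ia\ia')$ (restricted to $\ggl{as}$). Two distinct paths from $a$ to $s$ together contain at least $\la+1$ edges when they share all but a vertex, but generically their union has many edges; summing $(c/n)^{|\ga\cup\ga'|}$ over the number of internal vertices, one finds the union of two distinct $a$-$s$ paths has at least $3$ edges and that the dominant configurations give $\E\xaa=O(n^{-2})$, matching the known $\P(\XA\ge2)=O(n^{-2})$ heuristic. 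I would carry out the vertex-counting just as in Lemmas \ref{L:Type1}--\ref{L:3}: parametrize by the shared and unshared portions, bound the number of configurations by the appropriate power of $n$, and weight by $(c/n)^{\#\text{edges}}$, with the cut-off $L=\log^2 n$ controlling long paths via the $O(c^L)=O(n\qwq)$ tail as in \refL{L:L}.

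The main obstacle will be the cross terms, especially $\E(\XA\xbb)$ and $\E(\xaa\XB)$, where I must track how paths near $a$ and $s$ interact with the doubled structure near $s$ and $b$. Here I would split according to whether the relevant paths share edges. If the single $a$-$s$ path is edge-disjoint from both $s$-$b$ paths, independence gives a product $\E\XA\cdot\E\xbb=O(n^{-1})\cdot O(n^{-2})=O(n^{-3})$, which is too large by itself; the point is that this contribution is exactly cancelled by the $\E\XA\,\E\xbb$ term in the covariance bound, so only the dependent configurations (sharing at least one edge, hence at least one additional vertex-coincidence constraint) survive, and each such coincidence costs a factor $n^{-1}$. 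Thus the genuinely correlated part carries an extra factor relative to $n^{-3}$, yielding $O(n^{-4})$. For $\C(\xaa,\xbb)$ both sides are already $O(n^{-2})$ in expectation, so even the independent product $\E\xaa\,\E\xbb=O(n^{-4})$ is acceptable, and the dependent part is smaller still.

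To assemble the argument cleanly I would state a single counting principle: the expected number of configurations consisting of $t$ not-all-edge-disjoint paths among $\{a,s,b\}$ with a prescribed total of $e$ distinct edges and $v$ distinct internal vertices contributes $O(n^{v-e})$, and edge-sharing between two path-bundles forces $v-e$ to drop by at least one below the independent value. Applying this with the three target quantities, each correlated term lands in $O(n^{-4})$ and each independent term cancels in the covariance. The only care needed is uniformity in $c$ for $0\le c\le c'<1$, which follows since every geometric sum is dominated by $\sum_\ell c^\ell=(1-c)\qw\le(1-c')\qw$, exactly as in the preceding lemmas.
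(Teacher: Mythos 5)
Your high-level strategy --- control $\xaa,\xbb$ by second and third factorial moments, note that edge-disjoint configurations cancel in the covariance, and charge each edge-sharing coincidence a factor of $n^{-1}$ --- is the same skeleton as the paper's proof, and your treatment of $\C(\xaa,\xbb)$ (where the crude bound $\E(\xaa\xbb)+\E\xaa\,\E\xbb\le\E\bigl(\tbinom{\XA}{2}\tbinom{\XB}{2}\bigr)+\E\tbinom{\XA}{2}\E\tbinom{\XB}{2}=O(n^{-4})$ suffices) is fine. But there are two genuine gaps in the cross terms. First, your cancellation step for $\C(\XA,\xbb)$ is not legitimate as stated: to cancel the edge-disjoint configurations against $\E\XA\,\E\xbb$ term by term, you need $\xbb$ to be an exact sum of indicator products over configurations, and $(\XB-1)_+$ is not such a sum. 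You cannot substitute the upper bound $\xbb\le\tbinom{\XB}{2}$ inside a covariance (covariances are not monotone). The paper resolves this by the exact decomposition $\xbb=\tbinom{\XB}{2}-\ybb$ with $0\le\ybb\le\tbinom{\XB}{3}$, computing $\C\bigl(\XA,\tbinom{\XB}{2}\bigr)$ with genuine cancellation and killing the correction brutally via $\E\tbinom{\XB}{3}=O(n^{-3})$ and $\E\bigl(\XA\tbinom{\XB}{3}\bigr)=O(n^{-4})$. You mention $\tbinom{\XA}{2}$ only as a crude upper bound, so this step is missing.

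Second, your ``single counting principle'' --- that each edge-sharing coincidence costs a factor $n^{-1}$, so every correlated configuration lands in $O(n^{-4})$ --- is false as a blanket statement, and the exceptions are exactly where the work lies. Parametrizing by the numbers $\mu_i$ of excursions, the generic bound is $n^{-1-\sum_i\mu_i}$ (times polylog factors), which for the low-excursion cases ($\sum\mu_i=2$ or $3$, e.g.\ a single $s$--$b$ path rejoining a pair of $a$--$s$ paths along one excursion, or $\beta$ entirely contained in $\alpha_1\cup\alpha_2$) gives only $O(n^{-3})$ or worse. The paper must handle these borderline cases by ad hoc structural arguments: the edge of $\beta$ at $s$ must have the opposite orientation to the $\alpha$ paths, forcing either a vanishing expectation or forcing $b$ to be an interior vertex of an $\alpha$ path (which costs the missing factor of $n$); and the $\mu_2=\mu_3=1$, $\mu_4=0$ configuration for $\E\bigl(\tbinom{\XA}{2}\tbinom{\XB}{2}\bigr)$ requires identifying a specific seven-parameter geometry (both $\alpha$ paths and both $\beta$ paths passing through common vertices $x,y,z,b$) before the count comes out to $O(n^{-4})$. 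Without isolating and disposing of these cases, the proof is incomplete.
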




\begin{proof}
We only need to consider paths in $\Gamma^L$, which is assumed throughout the proof.
Define $Y_A:=\binom \XA2$, the number of pairs of (distinct) paths from $a$
to $s$, and similarly $Y_B:=\binom {\xb}2$.
Then $0\le \xaa \le Y_A$ and $0\le\xbb \le Y_B$.
Let $\yaa:=Y_A-\xaa$ and $\ybb:=Y_B-\xbb$. Then $\yaa=0$ unless $X_A\ge3$.

Further, let $Z_A:=\binom \XA3$, the number of triples of (distinct) paths from 
$a$ to $s$. Then $0\le \yaa \le Z_A$.

To show that $\C(\XA,\xbb)=\C(\xaa,\XB)=O(n^{-4})$, we write
$\C(\xaa,\XB)=\C(\YA-\yaa,\XB)=\C(\YA,\XB)-\C(\yaa,\XB)$.
Here, $\C(\yaa,\XB)=\E(\yaa\XB)-\E(\yaa)\cdot\E(\XB)$, where
$\E(\yaa\XB)\le\E(\ZA\XB)$, which we will show is $O(n^{-4})$.
Further we will show that $\E(\XA)=\E(\XB)=O(n^{-1})$ and that
$\E(\yaa)\le\E(\ZA)=O(n^{-3})$, so that $\C(\yaa,\XB)=O(n^{-4})$. 
Finally we will show
that $\C(\YA,\XB)=O(n^{-4})$ finishing the proof of the first part of the
lemma. 

For the second part we write $\C(\xaa,\xbb)=\E(\xaa\xbb)-\E(\xaa)\cdot\E(\xbb)$.
We prove that $\E(\xaa\xbb)\le\E(\YA\YB)=O(n^{-4})$ and that $\E(\xaa)=\E(\xbb)\le\E(\YA)=O(n^{-2})$, which finishes the proof.\\[6pt]
\textbf{(i)} $\E(\XA)=O(n^{-1})$:\\[6pt]  
Let $\alpha$ denote an arbitrary path from $a$ to $s$ (in $\Gamma^L$) with length $l\ge1$. Then,
\[\E(\XA)=\E\Big(\sum_{\alpha}I_{\alpha}\Big)=\sum_{\alpha}\E(I_{\alpha})\le\sum_{l=1}^L n^{l-1}\left(\frac cn\right)^{l}\le\frac c{1-c}\cdot n^{-1}=O(n^{-1}).\]

\medskip\noindent
\textbf{(ii)} $\E(\YA)=O(n^{-2})$:\\[6pt]  
Let $\alpha_1$ and $\alpha_2$, with lengths $l_1$ and $l_2$ be two distinct paths from $a$ to $s$. 
Further, let $\delta=|\alpha_2\setminus\alpha_1|$ be the number of edges in $\alpha_2$ not in $\alpha_1$, which form $\mu>0$ subpaths of $\alpha_2$ with no interior vertices in common with $\alpha_1$. 
The number of choices for $\alpha_2$ is (compare the proof of Lemma \ref{L:3}) at most $n^{\delta-\mu}(l_1+1)^{2\mu}\binom{\delta-1}{\mu-1}$, which gives
\begin{align*}\E(\YA)&=\sum_{\alpha_1\ne\alpha_2}\E(I_{\alpha_1}I_{\alpha_2})\le\sum_{l_1,\delta,\mu}n^{l_1-1}\left(\frac cn\right)^{l_1}n^{\delta-\mu}(l_1+1)^{2\mu}\binom{\delta-1}{\mu-1}\cdot\left(\frac cn\right)^{\delta}\\
&=\sum_{l_1,\delta,\mu}n^{-\mu-1}(l_1+1)^{2\mu}c^{l_1+\delta}\binom{\delta-1}{\mu-1}.\end{align*}
\emph{Case 1:} $\mu\ge2$.\\ 
Here, $(l_1+1)^{2\mu}\le(L+1)^{2\mu}$, $\binom{\delta-1}{\mu-1}\le(\delta-1)^{\mu-1}\le\delta^{\mu}\le L^{\mu}$, so that 
the terms are at most $n^{-\mu-1}c^{l_1+\delta}(L+1)^{3\mu}$.
Summing over $l_1$ and $\delta$ gives at most $\frac{c^2}{(1-c)^2}(L+1)^{3\mu}n^{-\mu-1}$, which summed for $\mu\ge2$ is
$O(L^6n^{-3})=O(n^{-3}\log^{12}n)=O(n^{-2})$.\\[3pt]
\emph{Case 2:} $\mu=1$.\\
 Here, $\binom{\delta-1}{\mu-1}=1$, and 
\[\sum_{l_1,\delta}\E(I_{\alpha_1}I_{\alpha_2})
\le n^{-2}\sum_{l_1\ge1}(l_1+1)^2c^{l_1}\sum_{\delta\ge1}c^{\delta}
= O(n^{-2}).\]

\medskip\noindent
\textbf{(iii)} $\E(\ZA)=O(n^{-3})$:\\[6pt]  
We have
\[\E(\ZA)=\sum_{\alpha_1,\alpha_2,\alpha_3}\E(I_{\alpha_1}I_{\alpha_2}I_{\alpha_3}),\]
where $\alpha_1$, $\alpha_2$ and $\alpha_3$ denote three distinct paths from
$a$ to $s$. 

Let $l_1$ denote the length of $\alpha_1$, let
$\delta_2=|\alpha_2\setminus\alpha_1|$ be the number of edges in $\alpha_2$
not in $\alpha_1$ forming $\mu_2>0$ subpaths of $\alpha_2$ intersecting $\alpha_1$ only at the endvertices, and let
$\delta_3=|\alpha_3\setminus(\alpha_1\cup\alpha_2)|$ be the number of edges
in $\alpha_3$ not in $\alpha_1$ or $\alpha_2$ forming $\mu_3\ge0$
subpaths of $\alpha_3$ whose interior vertices are not in $\alpha_1$ or $\alpha_2$.  
Note that $\mu_3=0$ is possible if $\mu_2\ge2$, as
then $\alpha_3$ can be formed by one part from $\alpha_1$ and one part from
$\alpha_2$; however, if $\mu_2=1$ then $\mu_3\ge1$. 
Hence, $\mu_2+\mu_3\ge2$.

If all common edges of the three paths have the same direction,
$\E(I_{\alpha_1}I_{\alpha_2}I_{\alpha_3})=\left(\frac
cn\right)^{l_1+\delta_2+\delta_3}$, otherwise it is 0, so we need only consider paths with the same direction. 
The number of choices for $\alpha_2$ is, as in (ii), at most
$n^{\delta_2-\mu_2}\cdot(l_1+1)^{2\mu_2}\cdot\binom{\delta_2-1}{\mu_2-1}$
and the number of choices for $\alpha_3$ is at most 
$n^{\delta_3-\mu_3}\cdot(l_1+\delta_2-\mu_2+1)^{2\mu_3}\cdot\binom{\delta_3-1}{\mu_3-1}\cdot2^{\mu_2}$, where the last factor is an upper bound for the possible number of choices between segments of $\alpha_1$ and $\alpha_2$. 
Thus, with summation over
$l_1\ge1,\delta_2\ge\mu_2\ge1,\delta_3\ge\mu_3\ge0$,
with  $\mu_2+\mu_3\ge2$, 
\begin{equation}\label{siii}
  \begin{split}
\sum &\E(I_{\alpha_1}I_{\alpha_2}I_{\alpha_3})\le\sum n^{l_1-1}\cdot n^{\delta_2-\mu_2}\cdot(l_1+1)^{2\mu_2}\cdot\tbinom{\delta_2-1}{\mu_2-1}\cdot\\ 
&\cdot n^{\delta_3-\mu_3}\cdot(l_1+\delta_2-\mu_2+1)^{2\mu_3}\cdot\tbinom{\delta_3-1}{\mu_3-1}\cdot2^{\mu_2}\cdot\left(\frac cn\right)^{l_1+\delta_2+\delta_3}\\
&=\sum n^{-\mu_2-\mu_3-1}\cdot(l_1+1)^{2\mu_2}\cdot\tbinom{\delta_2-1}{\mu_2-1}\cdot(l_1+\delta_2-\mu_2+1)^{2\mu_3}\cdot\tbinom{\delta_3-1}{\mu_3-1}\cdot2^{\mu_2}\cdot c^{l_1+\delta_2+\delta_3}.	
  \end{split}
\end{equation}
\emph{Case 1:} $\mu_2+\mu_3\ge3$.\\ 
Here, $(l_1+1)^{2\mu_2}\le(L+1)^{2\mu_2}$, $\tbinom{\delta_2-1}{\mu_2-1}\le
L^{\mu_2}$, 
$(l_1+\delta_2-\mu_2+1)^{2\mu_3}\le(2L+1)^{2\mu_3}\le (L+1)^{3\mu_3}$ 
(assuming as we may $L\ge4$),
$\tbinom{\delta_3-1}{\mu_3-1}\le L^{\mu_3}$ and $2^{\mu_2}\le L^{\mu_2}$, so
that the sum over $l_1,\delta_2, \delta_3$ is at most 
\begin{equation}
  \label{siiia}
n^{-\mu_2-\mu_3-1}\cdot (L+1)^{4\mu_2+4\mu_3}\cdot\sum c^{l_1+\delta_2+\delta_3}
\le (1-c)^{-3}\cdot n^{-\mu_2-\mu_3-1}\cdot(L+1)^{4(\mu_2+\mu_3)}.
\end{equation}
Summing over $\mu_2$ and $\mu_3$, with $\mu_2+\mu_3\ge3$ gives
\[O(n^{-4}\cdot L^{12})=O(n^{-4}\log^{24}n)=O(n^{-3}).\]
\emph{Case 2:} $\mu_2+\mu_3=2$.\\ 
Here, $(\mu_2,\mu_3)\in\{(2,0),(1,1)\}$, so that 
$(l_1+1)^{2\mu_2}\le(l_1+1)^4$, $\tbinom{\delta_2-1}{\mu_2-1}\le\delta_2$, 
$(l_1+\delta_2-\mu_2+1)^{2\mu_3}\le(l_1+\delta_2)^2$,
$\tbinom{\delta_3-1}{\mu_3-1}=1$ and  $2^{\mu_2}\le4$, so that summing over
$l_1,\delta_2, \delta_3$ and $\mu_2+\mu_3=2$ gives at most 
\[2\cdot4\cdot
n^{-3}\cdot\sum_{l_1,\delta_2,\delta_3}(l_1+1)^4\cdot\delta_2\cdot(l_1+\delta_2)^2\cdot
c^{l_1+\delta_2+\delta_3}
=O(n^{-3}).\]

\medskip\noindent
\textbf{(iv)} $\E(\ZA\cdot\XB)=O(n^{-4})$:\\[6pt] 
$\E(\ZA\cdot\XB)=\sum\E(I_{\alpha_1}I_{\alpha_2}I_{\alpha_3}I_{\beta})$, where $
\alpha_1$, $\alpha_2$ and $\alpha_3$ are three distinct paths from $a$ to $s$ and $\beta$ is a path from $s$ to $b$.
We need only consider paths where all common edges have the same direction, as 
$\E(I_{\alpha_1}I_{\alpha_2}I_{\alpha_3}I_{\beta})=0$ otherwise. 

As in (iii) the three $\alpha$ paths are described by
$l_1,\delta_2,\mu_2,\delta_3,\mu_3$. Let
$\delta_4:=|\beta\setminus(\alpha_1\cup\alpha_2\cup\alpha_3)|$ be the number
of edges in $\beta$, not in any of the $\alpha$ paths, and let these form
$\mu_4$ subpaths of $\beta$ whose endvertices lie on
$\alpha_1,\alpha_2,\alpha_3$ but share no other vertices with those
paths. 
The number of choices for the $\alpha$ paths are the same as in (iii) and
given those, and $\delta_4,\mu_4$, the $\beta$ path  can be chosen in at
most 
$n^{\delta_4-\mu_4}\cdot(l_1+\delta_2-\mu_2+\delta_3-\mu_3+1)^{2\mu_4}\cdot\tbinom{\delta_4-1}{\mu_4-1}\cdot3^{2(\mu_2+\mu_3)}$
ways,
where the last factor is a crude upper bound for the number of ways
$\beta$ can choose different sections from the $\alpha$ paths, 
as there are at most $2(\mu_2+\mu_3)$
vertices where a choice can be made
and there are at most 3 possible choices at each of these. 
Clearly, 
$\E(I_{\alpha_1}I_{\alpha_2}I_{\alpha_3}I_{\beta})=(\frac
cn)^{l_1+\delta_2+\delta_3+\delta_4}$ since all common edges have the same
direction.

Note that $\mu_4\ge1$ for non-zero terms as otherwise the
first edge in $\beta$ from $s$ would be the last edge in one of the $\alpha$
paths, and therefore would have opposite direction. Further, $\mu_2\ge1$,
$\mu_3\ge0$, but $\mu_2+\mu_3\ge2$ as $\mu_2=1,\mu_2=0$ would imply that
$\alpha_3=\alpha_1$ or $\alpha_3=\alpha_2$. 

Summing over $l_1\ge1$, $\mu_2\ge1$, $\delta_2\ge\mu_2$, $\mu_3\ge0$,
$\delta_3\ge\mu_3$, $\mu_4\ge1$ and $\delta_4\ge\mu_4$ 
with $\mu_2+\mu_3\ge2$ 
gives at most
\begin{equation}\label{siv}
  \begin{split}
\sum &n^{l_1-1}\cdot n^{\delta_2-\mu_2}\cdot(l_1+1)^{2\mu_2}\cdot\tbinom{\delta_2-1}{\mu_2-1}\cdot n^{\delta_3-\mu_3}\cdot(l_1+\delta_2-\mu_2+1)^{2\mu_3}\cdot\tbinom{\delta_3-1}{\mu_3-1}\cdot2^{\mu_2}\cdot\\
&\cdot n^{\delta_4-\mu_4}\cdot(l_1+\delta_2-\mu_2+\delta_3-\mu_3+1)^{2\mu_4}\cdot\tbinom{\delta_4-1}{\mu_4-1}\cdot3^{2(\mu_2+\mu_3)}\cdot\left(\frac cn\right)^{l_1+\delta_2+\delta_3+\delta_4}\\
&=\sum n^{-\mu_2-\mu_3-\mu_4-1}\cdot(l_1+1)^{2\mu_2}\cdot\tbinom{\delta_2-1}{\mu_2-1}\cdot(l_1+\delta_2-\mu_2+1)^{2\mu_3}\cdot\tbinom{\delta_3-1}{\mu_3-1}\cdot2^{\mu_2}\cdot\\
&\cdot (l_1+\delta_2-\mu_2+\delta_3-\mu_3+1)^{2\mu_4}\cdot\tbinom{\delta_4-1}{\mu_4-1}\cdot3^{2(\mu_2+\mu_3)}\cdot c^{l_1+\delta_2+\delta_3+\delta_4}.	
  \end{split}
\end{equation} 
\emph{Case 1:} $\mu_2+\mu_3+\mu_4\ge4$.\\ 
Here, using the same type of estimates as in (iii) and summing over
$l_1,\delta_2,\delta_3,\delta_4$ gives at most 
\[n^{-\mu_2-\mu_3-\mu_4-1}\cdot(L+1)^{7\mu_2+7\mu_3+4\mu_4}\sum c^{l_1+\delta_2+\delta_3+\delta_4}\le(1-c)^{-4}n^{-\mu_2-\mu_3-\mu_4-1}\cdot(L+1)^{7(\mu_2+\mu_3+\mu_4)},\] 
which summed over $\mu_2+\mu_3+\mu_4\ge4$ is
\[O(n^{-5}\cdot L^{28})=O(n^{-5}\cdot\log^{56}n)=O(n^{-4}).\]
\emph{Case 2:} $\mu_2+\mu_3+\mu_4=3$.\\ 
Here, $(\mu_2,\mu_3,\mu_4)\in\{(2,0,1),(1,1,1)\}$ so that 
$(l_1+1)^{2\mu_2}\le(l_1+1)^{4}$,
$\tbinom{\delta_2-1}{\mu_2-1}\le\delta_2$,
$(l_1+\delta_2-\mu_2+1)^{2\mu_3}\le(l_1+\delta_2)^{2}$,
$\tbinom{\delta_3-1}{\mu_3-1}=\tbinom{\delta_4-1}{\mu_4-1}=1$,
$2^{\mu_2}\le4$,
$(l_1+\delta_2-\mu_2+\delta_3-\mu_3+1)^{2\mu_4}\le(l_1+\delta_2+\delta_3)^{2}$
and $3^{2(\mu_2+\mu_3)}=3^4=81$, so that the sum over
$l_1,\delta_2,\delta_3,\delta_4$ is finite and the total contribution is
$O(n^{-4})$. 

\medskip\noindent
\textbf{(v)} $\E(\YA\cdot\YB)=O(n^{-4})$:\\[6pt] 
$\E(\YA\cdot\YB)=\sum\E(I_{\alpha_1}I_{\alpha_2}I_{\beta_3}I_{\beta_4})$, where 
$\alpha_1$ and $\alpha_2$ are two distinct paths from $a$ to $s$ and $\beta_3$ and $\beta_4$ are two distinct  paths from $s$ to $b$.
As above, we need only consider paths where all common edges have the same direction.
As before, $\alpha_1$ and $\alpha_2$ are described by 
$l_1=|\alpha_1|\ge1$, $\delta_2=|\alpha_2\setminus\alpha_1|\ge1$, 
the number of edges in $\alpha_2$ not in $\alpha_1$, and $\mu_2\ge1$, the
number of subpaths they form that intersect $\alpha_1$ in (and only in) the
endvertices. 
Then $\beta_3$ is described by $\delta_3=|\beta_3\setminus(\alpha_1\cup\alpha_2)|$, the number of edges in $\beta_3$ not in $
\alpha_1$ or $\alpha_2$, and $\mu_3$, the number of subpaths they form with no interior vertices in common with 
$\alpha_1,\alpha_2$.
Similarly, $\beta_4$ is described by
$\delta_4=|\beta_3\setminus(\alpha_1\cup\alpha_2\cup\beta_3)|\ge0$, the
number of edges in $\beta_4$ not in $\alpha_1$, $\alpha_2$ or $\beta_3$
and $\mu_4\ge0$, the number of subpaths they form which intersect $\alpha_1,\alpha_2,\beta_3$ 
in (and only in) the endvertices. 
Note that $\mu_3\ge1$ for every non-zero term,
as otherwise the first edge in $\beta_3$ from $s$ would be the last edge in
one of the $\alpha$ paths, and therefore would have opposite direction.

The number of choices for the $\alpha$ paths are the same as in (ii) and given those, and $\delta_3,\mu_3,\delta_4,\mu_4$, the $\beta$ paths can be chosen in at most
$n^{\delta_3-\mu_3}\cdot(l_1+\delta_2-\mu_2+1)^{2\mu_3}\cdot\tbinom{\delta_3-1}{\mu_3-1}\cdot2^{\mu_2}\cdot
n^{\delta_4-\mu_4}\cdot(l_1+\delta_2-\mu_2+\delta_3-\mu_3+1)^{2\mu_4}\cdot\tbinom{\delta_4-1}{\mu_4-1}\cdot3^{2(\mu_2+\mu_3})$, 
where the last factor is an upper bound for the number of ways $\beta_4$ can
choose different sections from the $\alpha$ paths and $\beta_3$. 

When all common edges have the same direction,
$\E(I_{\alpha_1}I_{\alpha_2}I_{\beta_3}I_{\beta_4})=(\frac
cn)^{l_1+\delta_2+\delta_3+\delta_4}$.
Summing over $l_1\ge1$, $\mu_2\ge1$, $\delta_2\ge\mu_2$, $\mu_3\ge1$, $\delta_3\ge\mu_3$, $\mu_4\ge0$ and $\delta_4\ge\mu_4$ gives at most
\begin{align*}
\sum &n^{l_1-1}\cdot n^{\delta_2-\mu_2}\cdot(l_1+1)^{2\mu_2}\cdot\tbinom{\delta_2-1}{\mu_2-1}\cdot n^{\delta_3-\mu_3}\cdot(l_1+\delta_2-\mu_2+1)^{2\mu_3}\cdot\tbinom{\delta_3-1}{\mu_3-1}\cdot2^{\mu_2}\cdot\\
&\cdot n^{\delta_4-\mu_4}\cdot(l_1+\delta_2-\mu_2+\delta_3-\mu_3+1)^{2\mu_4}\cdot\tbinom{\delta_4-1}{\mu_4-1}\cdot3^{2(\mu_2+\mu_3)}\cdot\left(\frac cn\right)^{l_1+\delta_2+\delta_3+\delta_4}\\
&=\sum n^{-\mu_2-\mu_3-\mu_4-1}\cdot(l_1+1)^{2\mu_2}\cdot\tbinom{\delta_2-1}{\mu_2-1}\cdot(l_1+\delta_2-\mu_2+1)^{2\mu_3}\cdot\tbinom{\delta_3-1}{\mu_3-1}\cdot2^{\mu_2}\cdot\\
&\cdot (l_1+\delta_2-\mu_2+\delta_3-\mu_3+1)^{2\mu_4}\cdot\tbinom{\delta_4-1}{\mu_4-1}\cdot3^{2(\mu_2+\mu_3)}\cdot c^{l_1+\delta_2+\delta_3+\delta_4}.
\end{align*} 
We sum the same terms as in \eqref{siv}, so the sum over all terms with
$\mu_4\ge1$ is $O(n^{-4})$ by the estimates in part {(iv)}.
Hence it suffices to consider the terms with $\mu_4=0$ and thus $\gd_4=0$.
\\
\emph{Case 1:} $\mu_2+\mu_3\ge4$, $\mu_4=0$.\\
Here, each term is $3^{2(\mu_2+\mu_3)}$ times the corresponding term in
\eqref{siii}.
Hence, the estimates in (iii) show that, cf.\ \eqref{siiia}, 
summing over $l_1,\delta_2,\delta_3$ gives at most
\[
(1-c)^{-3}n^{-\mu_2-\mu_3-1}\cdot(L+1)^{6(\mu_2+\mu_3)},
\]
which summed over $\mu_2+\mu_3\ge4$ is
\[O(n^{-5}\cdot L^{24})=O(n^{-5}\cdot\log^{48}n)=O(n^{-4}).\]
\emph{Case 2:} $\mu_2+\mu_3=3$, $\mu_4=0$.\\ 
Here, $\mu_2,\mu_3\le2$ so that 
$(l_1+1)^{2\mu_2}\le(l_1+1)^{4}$, $\tbinom{\delta_2-1}{\mu_2-1}\le\delta_2$,
$(l_1+\delta_2-\mu_2+1)^{2\mu_3}=(l_1+\delta_2)^{4}$,
$\tbinom{\delta_3-1}{\mu_3-1}\le\gd_3$,
$2^{\mu_2}\le4$,
and $3^{2(\mu_2+\mu_3)}=3^6=729$, so that the sum over
$l_1,\delta_2,\delta_3$ is 
$O(n^{-\mu_2-\mu_3-1})$ and the contribution is
$O(n^{-4})$.\\[6pt] 
\emph{Case 3:} $\mu_2+\mu_3=2$, $\mu_4=0$.\\ 
This can only occur if $\mu_2=\mu_3=1$. 
Thus,  $\beta_3$ starts with an edge not in any of the $\alpha$ paths
and, as this is its only excursion it must end up at one of the $\alpha$
paths and follow it to $b$ (if $\beta_3$ were to go straight to $b$ without
coinciding with any of the $\alpha$ paths then $\beta_4$ would have to do
the same, so that $\beta_3=\beta_4$). $\beta_4$ must start as $\beta_3$
until it encounters an $\alpha$ path and must have the possibility to chose
a different path to $b$ than $\beta_3$ along the $\alpha$ paths. This means
that both $\alpha$ paths must pass through $b$ and that they only differ
somewhere between $a$ and $b$. Thus, see Figure \ref{F:3}, there must be
three vertices $x$ (possibly $x=a$), $y$ (possibly $y=x$) and $z$ (possibly
$z=b$) between $a$ and $b$, so that both $\alpha$ paths pass in order
$a,x,y,z,b,s$, and both $\beta$ paths pass in order $s,x,y,z,b$. Both the
two $\alpha$ paths and the two $\beta$ paths follow different subpaths
between $y$ and $z$. 
Let the number of edges between $a$ and $x$ be $i\ge0$, between $x$ and $y$
be $j\ge0$, between $y$ and $z$ be $k\ge1$ and $l\ge1$ for the two
possibilities (with $k+l\ge3$), between $z$ and $b$ be $m\ge0$, between $s$
and $x$ be $r\ge1$ and between $b$ and $s$ be $t\ge1$.

\begin{figure}[htbp]
\includegraphics[width=7cm]{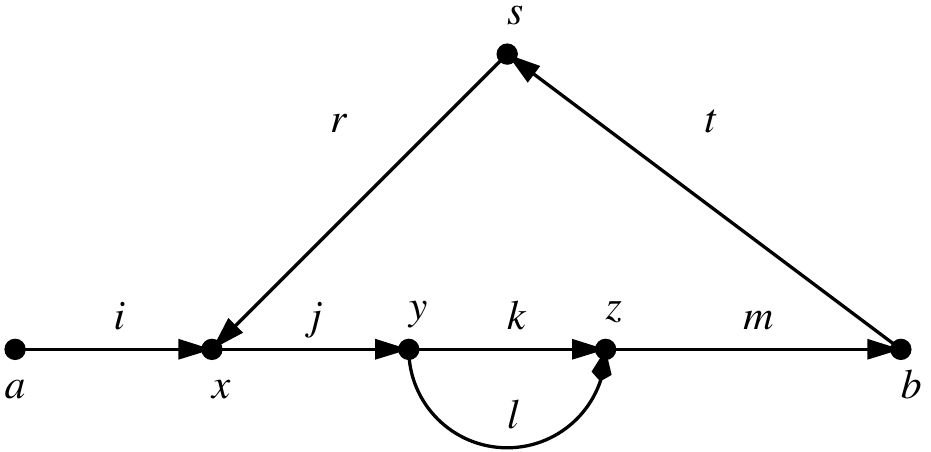}
\caption{Configurations for \emph{Case 3} of  \textbf{(v)}: $\mu_2+\mu_3+\mu_4=2$.} 
\label{F:3}
\end{figure}

Then, $\E(I_{\alpha_1}I_{\alpha_2}I_{\beta_3}I_{\beta_4})=\left(\frac cn\right)^{i+j+k+l+m+r+t}$ and the number of possibilities is at most
$2n^{i+j+k+l+m+r+t-4}$, so that the sum over $i,j,k,l,m,r,t$ is $O(n^{-4})$.

\medskip\noindent
\textbf{(vi)} $\C(\YA,\XB)=O(n^{-4})$:
\[|\C(\YA,\XB)|=|\sum_{\alpha_1\ne\alpha_2}\sum_{\beta}\C(I_{\alpha_1}\cdot I_{\alpha_2},I_{\beta})|\le\sum_{\alpha_1\ne\alpha_2}\sum_{\beta}|\C(I_{\alpha_1}\cdot I_{\alpha_2},I_{\beta})|,\]
where
\[\C(I_{\alpha_1}\cdot I_{\alpha_2},I_{\beta})=\E(I_{\alpha_1}\cdot I_{\alpha_2}\cdot I_{\beta})-\E(I_{\alpha_1}\cdot I_{\alpha_2})\cdot\E(I_{\beta}),\]
which is 0 if $\alpha_1$ and $\alpha_2$ have a common edge with opposite
directions, or if $\beta$ has no edge in common with the $\alpha$ paths.\\ 
Let as above $\alpha_1$ have length $l_1$, $\alpha_2$ have $\delta_2$ edges
not in $\alpha_1$ forming $\mu_2$ subpaths of $\alpha_2$ intersecting
$\alpha_1$ in (and only in) the endvertices.  Let also $\beta$ have
length $l_{\beta}$ with $\delta_3$ edges not in $\alpha_1$ or $\alpha_2$
forming $\mu_3$ subpaths of $\beta$ intersecting $\alpha_1, \alpha_2$ in
(and only in) the endvertices.  Then, if all common edges of $\beta$ and
$\alpha_1\cup\alpha_2$ have the same direction, 
\[|\C(I_{\alpha_1}\cdot I_{\alpha_2},I_{\beta})|=
\left|\left(\frac cn\right)^{l_1+\delta_2+\delta_3}-\left(\frac cn\right)^{l_1+\delta_2+l_\beta}\right|\le\left(\frac cn\right)^{l_1+\delta_2+\delta_3},\]
and if $\beta$ has at least one common edge in opposite direction,
\[|\C(I_{\alpha_1}\cdot I_{\alpha_2},I_{\beta})|=
\left(\frac cn\right)^{l_1+\delta_2+l_\beta}\le\left(\frac cn\right)^{l_1+\delta_2+\delta_3.}
\]
The number of ways of choosing $\alpha_1$, $\alpha_2$ and $\beta$ is at
most, as in (iii) above, 
\[n^{l_1-1}\cdot n^{\delta_2-\mu_2}\cdot(l_1+1)^{2\mu_2}\cdot\tbinom{\delta_2-1}{\mu_2-1}\cdot n^{\delta_3-\mu_3}\cdot(l_1+\delta_2-\mu_2+1)^{2\mu_3}\cdot\tbinom{\delta_3-1}{\mu_3-1}\cdot4^{2\mu_2}.\]
The last factor is $4^{2\mu_2}$  in this case as $\beta$ can have opposite direction in the common subpaths. If there is a crossing between $\alpha_1$ and $\alpha_2$ there may be 4 choices for $\beta$ and there are at most $2\mu_2$ such vertices.
Thus, 
\begin{align*}
\hskip2em&\hskip-2em\sum_{\alpha_1\ne\alpha_2}\sum_{\beta}|\C(I_{\alpha_1}\cdot I_{\alpha_2},I_{\beta})|\le\sum_{l_1,\mu_2,\delta_2,\mu_2,\delta_3}n^{l_1-1}\cdot n^{\delta_2-\mu_2}\cdot(l_1+1)^{2\mu_2}\cdot\tbinom{\delta_2-1}{\mu_2-1}\cdot\\
&\cdot n^{\delta_3-\mu_3}\cdot(l_1+\delta_2-\mu_2+1)^{2\mu_3}\cdot\tbinom{\delta_3-1}{\mu_3-1}\cdot4^{2\mu_2}\cdot\left(\frac cn\right)^{l_1+\delta_2+\delta_3}\\
&\le\sum
n^{-\mu_2-\mu_3-1}\cdot(l_1+1)^{2\mu_2}\cdot\tbinom{\delta_2-1}{\mu_2-1}\cdot(l_1+\delta_2-\mu_2+1)^{2\mu_3}\cdot\tbinom{\delta_3-1}{\mu_3-1}\cdot4^{2\mu_2}\cdot
c^{l_1+\gd_2+\gd_3}.
\end{align*}
Here, $l_1\ge1$, $\mu_2\ge1$, $\delta_2\ge\mu_2$, $\mu_3\ge0$ and
$\delta_3\ge\mu_3$. Note that the terms in the final sum are the same as in
\eqref{siii}, except that $2^{\mu_2}$ is replaced by $4^{2\mu_2}$.
\\[6pt]
\emph{Case 1:} $\mu_2+\mu_3\ge4$.\\ 
Here, using the same estimates as in (iii), 
see \eqref{siiia}, 
the sum over $l_1, \gd_2,\gd_3$ is, for $L\ge16$, at most
\begin{align*}
(1-c)^{-3}\cdot n^{-\mu_2-\mu_3-1}\cdot(L+1)^{4(\mu_2+\mu_3)}.\end{align*}
Summing over $\mu_2+\mu_3\ge4$ gives
$O(n^{-5}\cdot L^{16})=O(n^{-5}\log^{32}n)=O(n^{-4})$.\\[6pt]
\emph{Case 2:} $\mu_2+\mu_3=3$.\\ 
Here, $(\mu_2,\mu_3)\in\{(3,0),(2,1),(1,2)\}$ and
$(l_1+1)^{2\mu_2}\le(l_1+1)^6$, 
$\tbinom{\delta_2-1}{\mu_2-1}\le\delta_2^2$, 
$(l_1+\delta_2-\mu_2+1)^{2\mu_3}\le(l_1+\delta_2)^4$,
$\tbinom{\delta_3-1}{\mu_3-1}\le\delta_3$ and $4^{2\mu_2}\le4^6=4096$. 
Summing over $l_1,\delta_2,\mu_2,\delta_3,\mu_3$ gives at most
\[3n^{-4}\sum_{l_1,\delta_2,\delta_3}4096\cdot(l_1+1)^6\cdot\delta_2^2\cdot
(l_1+\delta_2)^4\cdot \delta_3\cdot c^{l_1+\delta_2+\delta_3}=O(n^{-4}).\] 
\emph{Case 3:} $\mu_2=\mu_3=1$.\\
We need only consider the situation when $\beta$ has at least one edge in common with $\alpha_1\cup\alpha_2$, as otherwise the covariance is 0.\\
\emph{Subcase 3.1: At least one common edge has opposite direction.}\\
$|\C(I_{\alpha_1}\cdot I_{\alpha_2},I_{\beta})|=c^{l_1+\delta_2+l_{\beta}}\cdot n^{-l_1-\delta_2-l_{\beta}}$.
Here, $l_{\beta}\ge2$, as $l_{\beta}=1$ would imply that $\mu_3=0$. Further,
$l_1+\delta_2\ge3$, as otherwise $\alpha_1=\alpha_2$. Let
$l_{\alpha\beta}=|\gb\cap(\ga_1\cup\ga_2)|=l_{\beta}-\delta_3\ge1$.
Then, estimating the number of possible choices of the paths as above,
\begin{align*}
\sum_{l_1,\delta_2,\delta_3,l_{\beta}}&|\C(I_{\alpha_1}\cdot I_{\alpha_2},I_{\beta})|\\&\le\sum n^{l_1-1}\cdot n^{\delta_2-1}\cdot(l_1+1)^2\cdot n^{\delta_3-1}\cdot(l_1+\delta_2)^2\cdot2\cdot c^{l_1+\delta_2+l_{\beta}}\cdot n^{-l_1-\delta_2-l_{\beta}}\\
&=2\cdot\sum_{l_1,\delta_2,\delta_3,l_{\alpha\beta}}(l_1+1)^2\cdot(l_1+\delta_2)^2\cdot c^{l_1+\delta_2+\delta_3+l_{\alpha\beta}}\cdot n^{-3-l_{\alpha\beta}}=O(n^{-4}).
\end{align*}
\emph{Subcase 3.2: All common edges have the same direction.}\\
The first edge of $\beta$, from $s$, must be disjoint with
$\alpha_1\cup\alpha_2$. Let $\beta$ start with $i\ge1$ disjoint steps and
then join one of the $\alpha$ paths, $\alpha_1$ say, for a further $j\ge1$
steps to $b$. Further, let $\alpha_1$ have $k\ge0$ steps before joining
$\beta$ and ending with $l$ steps from $b$ to $s$. As before, $\alpha_2$ is 
determined by two vertices on $\alpha_1$ and $\delta_2-1$ exterior vertices
giving at most $(l_1+1)^2\cdot n^{\delta_2-1}$ possibilities. Further,
$\beta$ can join either of the $\alpha$ paths, and may then do an excursion
along the other path, giving at most 4 possibilities.
Then, as $l_1=k+j+l$, 
\begin{align*}
\sum&|\C(I_{\alpha_1}\cdot I_{\alpha_2},I_{\beta})|\\
&\le4\cdot\sum_{i\ge1}\sum_{k\ge0}\sum_{j\ge1}\sum_{l\ge1}\sum_{\delta_2\ge1}n^{i-1}\cdot
n^{k+j+l-2}\cdot(l_1+1)^2\cdot n^{\delta_2-1}\cdot\left(\frac
cn\right)^{i+k+j+l+\delta_2}\\ 
&=4n^{-4}\cdot\sum_{i,k,j,l,\gd_2}(k+j+l+1)^2\cdot c^{i+k+j+l+\delta_2}=O(n^{-4}).
\end{align*}
\emph{Case 4:} $\mu_3=0$, $\mu_2\in\{1,2\}$.\\ 
$\mu_3=0$ implies that $\beta\subset(\alpha_1\cup\alpha_2)$, so that the
first edge in $\beta$ has opposite direction in
$\alpha_1\cup\alpha_2$. 
Furthermore, at least one of the $\alpha$ paths, $\alpha_1$
say, must pass through $b$, so that $l_1\ge2$. $\alpha_2$ can be chosen in
at most $(l_1+1)^{2\mu_2}\cdot n^{\delta_2-\mu_2}$ ways and there are at
most $2^{\mu_2}$ 
ways for $\beta$ to choose between the $\alpha$ paths,
giving at most 
$n^{l_1-2}\cdot(l_1+1)^{2\mu_2}\cdot n^{\delta_2-\mu_2}\cdot2^{\mu_2}\le4\cdot(l_1+1)^4\cdot n^{l_1+\delta_2-\mu_2-2}$ ways of choosing $\alpha_1$, $\alpha_2$ and $\beta$. The covariance is $-\left(\frac cn\right)^{l_1+\delta_2+l_{\beta}}$. 
Summing over $l_1\ge2$, $\mu_2=1,2$, $\delta_2\ge\mu_2$ and $l_{\beta}\ge1$ gives
\begin{align*}
\sum|\C(I_{\alpha_1}\cdot I_{\alpha_2},I_{\beta})|&\le4\sum(l_1+1)^4\cdot n^{l_1+\delta_2-\mu_2-2}\cdot\left(\frac cn\right)^{l_1+\delta_2+l_{\beta}}\\
&=4\sum(l_1+1)^4\cdot c^{l_1+\delta_2+l_{\beta}}\cdot n^{-\mu_2-l_{\beta}-2}=O(n^{-4}),
\end{align*}
which finishes the proof.
\end{proof}

\end{document}